\definecolor{guppiegreen}{rgb}{0.0, 1.0, 0.5}
\definecolor{vermilion}{rgb}{0.89, 0.26, 0.2}
\newcommand{\PreserveBackslash}[1]{\let\temp=\\#1\let\\=\temp}
\newcolumntype{C}[1]{>{\PreserveBackslash\centering}p{#1}}
\numberwithin{equation}{section}
\newcommand{\R}{\mathbb{R}}
\newcommand{\cZ}{\mathcal{Z}}
\newcommand{\cH}{\mathcal{H}}
\newcommand{\wT}{\widetilde{T}}
\newcommand{\wH}{\widetilde{H}}
\newcommand{\wE}{\widetilde{E}}
\newcommand{\wl}{\widetilde{\lambda}}
\newcommand{\hH}{\widehat{H}}
\newcommand{\hE}{\widehat{E}}
\newcommand{\bc}{\mathbf{c}}
\newtheorem{theorem}{Theorem}[section]
\newtheorem{corollary}[theorem]{Corollary}
\newtheorem{definition}[theorem]{Definition}
\newtheorem{lemma}[theorem]{Lemma}
\newtheorem{example}[theorem]{Example}
\newtheorem{remark}[theorem]{Remark}
\title{\bf
\rule{\linewidth}{1pt}
Uplifting edges in higher order networks: spectral centralities for non-uniform hypergraphs
\rule{\linewidth}{1pt}
} 
\renewcommand\@date{{%
  \vspace{-2\baselineskip}%
  \large\centering
  \begin{tabular}{@{}c@{}}
    Gonzalo Contreras-Aso\footnote{Corresponding author: gonzalo.contreras@urjc.es} \textsuperscript{$,\,\sharp$,$\flat$}
  \end{tabular}%
  \quad and \quad
  \begin{tabular}{@{}c@{}}
    Cristian Pérez-Corral\textsuperscript{\textsection}
  \end{tabular}%
  \quad and\quad
  \begin{tabular}{@{}c@{}}
    Miguel Romance\textsuperscript{$\sharp$,$\flat$}
  \end{tabular}

  \bigskip

{\it \normalsize
  \textsuperscript{$\sharp$}Departamento de Matem\'atica Aplicada, Ciencia e Ingenier\'{\i}a de los Materiales y Tecnolog\'{\i}a Electr\'onica, Universidad Rey Juan Carlos, 28933 M\'ostoles (Madrid), Spain\par

  \textsuperscript{$\flat$}Valencian Research Institute for Artificial Intelligence, Universitat Politècnica de València, 46022 València (Valencia), Spain 
  
  \textsuperscript{\textsection}Laboratorio de Computaci\'on Matem\'atica en Redes Complejas y sus Aplicaciones, Universidad Rey Juan Carlos, 28933 M\'ostoles (Madrid), Spain

}

  \bigskip

  \today
}}
\begin{document}

\maketitle

\begin{abstract}
    Spectral analysis of networks states that many structural properties of graphs, such as centrality of their nodes, are given in terms of their adjacency matrices. The natural extension of such spectral analysis to higher order networks is strongly limited by the fact that a given hypergraph could have several different adjacency hypermatrices, hence the results obtained so far are mainly restricted to the class of uniform hypergraphs, which leaves many real systems unattended. A new method for analysing non-linear eigenvector-like centrality measures of non-uniform hypergraphs is presented in this paper that could be useful for studying properties of $\cH$-eigenvectors and $\cZ$-eigenvectors in the non-uniform case. In order to do so,  a new operation - the \textit{uplift} - is introduced, incorporating auxiliary nodes in the hypergraph to allow for a uniform-like analysis and we furthermore use it to classify a whole family of hypergraphs with unique Perron-like $\cZ$-eigenvectors. We supplement the theoretical analysis with several examples and numerical simulations on synthetic and real datasets.
\end{abstract}

\newpage

\section{Introduction}\label{sec:introduction}

The last decade has a rise in the multidisciplinary field of ``complexity science'', partly due to the advances in computation, but also due to important milestones in theory. Within this far-reaching field, one of the most active and important areas is that of complex networks: the study of systems reduced to individuals and their interactions \cite{boccaletti2006structure}. This area was originated in the mathematics of graph theory, but it was also supplemented by ideas and concepts from statistical physics, biology, computation science, to mention a few, yielding an ever increasing amount of results in these and other areas, such as sociology, dynamical systems and even multilinear algebra. 

Theoretical advances in complex network theory have shed light on its own limitations; in particular these last years a lot of effort has been poured in extending the analytical, conceptual and numerical tools already available for graphs to the realm of hypergraphs. Hypergraphs are natural extensions to graphs, where interactions are not restricted to be of a pairwise nature, i.e. one can have interactions between two, three, four or even higher individuals. These are sometimes referred to as higher order interactions, or hyperedges for short. The quintessential example of these kinds of systems is that of co-authorship networks: there, nodes represent authors and scientific articles are the interactions between them (which clearly are not restricted to papers written by pairs of authors). 

Although there is already a plethora of results brought from classic complex network theory \cite{battiston2020networks, boccaletti2023structure}, there is an even higher amount of results which have yet to find their way into hypergraphs, to the point that it is a current area of research (see e.g. \cite{liu2023eigenvector}). The reason is pretty simple: considering higher order interactions usually complicates analytical calculations to the point where certain approximations or assumptions need to be established in order to make any progress at all. An illustrative example is that of obtaining spectral properties, such as spectral centrality measures, which provide a way to quantify the importance of nodes beyond their degrees, taking into account the relevance of their neighbors, and it is the basis of some real-world applications such as PageRank \cite{page1998pagerank}. In in graphs the study of these properties translates to studying matrices (adjacency, Laplacian, etc) whereas in hypergraphs translates to studying tensors, and while the former has been extensively studied, the latter has not. 

The previous example relates to the subject of this paper. In order to generalize spectral centralities of graphs to the case of hypergraphs, Benson \cite{benson2019three} makes use of the recently developed theory of $\cH$-eigenvectors and $\cZ$-eigenvectors of tensors \cite{qi2017tensor}, to formulate a series of centrality measures. This formulation does apply, however, to uniform hypergraphs; those where the hyperedges are all of the same order, which strongly limits the application of the obtained results to many real systems that only could be modelled by using hypergraphs with hyperedges of different sizes. The main goal of this paper is presenting a method that allows considering non-linear eigenvector-like centrality measures of non-uniform hypergraphs and therefore extends the results obtained by Benson to uniform higher order networks \cite{benson2019three}. 

The technique we present is based on incorporating auxiliary nodes in a non-uniform hypergraph in order to enable a uniform-like analysis, such that we can use all the tools and results obtained for uniform hypergraphs, but now for a general one. This uniformization process, named \textit{uplift} of an hypergraph, is somehow the dual of the projection process that transforms hyperedges among $k$ nodes into hyperedges of $2\le j<k$ nodes and it has sound mathematical properties that make it very useful for analyzing several structural properties of hypergraphs and hypermatrices, as it will be shown along this paper. 

It is important to point out that, although we will only focus on the analytical tools proposed and analyzed in this manuscript within the subject of network theory and centrality measures, the methods can be leveraged in all other areas where tensor eigenproblems have made an appearance, such as biology \cite{bini2011solution}, medical imaging \cite{qi2008eigenvalues}, quantum entanglement \cite{hu2016computing}, data mining \cite{benson2015tensor}, or higher order Markov chains \cite{ng2009finding}.

This article is structured as follows. In Section~\ref{sec:preliminaries} we provide the mathematical notions required throughout, as well as present a summary of the state of the art. In Section~\ref{sec:uplift}, we define the \textit{uplift} operation, which allows us to uniformize any hypergraph in a way that appropriately generalizes the uniform $\cH$-eigenvector centrality, something which is then discussed in Section~\ref{subsec:upliftHEC}. We show why it fails to generalize the $\cZ$-eigenvector centrality, although we rescue it in Section~\ref{subsec:upliftZeigs} to solve a problem in multilinear algebra, which in turn feeds back into the problem of $\cZ$-eigenvector centrality of certain hypergraphs. Several examples and numerical computations on a variety of synthetic and real higher order networks are included along the paper in order to illustrate the analytical results presented and compare them with other proposals and methods.


\section{Preliminaries}\label{sec:preliminaries}

We will start by giving a brief overview of the main concepts and notation which will be needed in what follows, first regarding the eigenvector centrality in standard graphs and then introducing useful hypergraph notions.

\paragraph{The eigenvector centrality in graphs:}
This work is framed in the context of centralities measures in networks \cite{boccaletti2006structure}. A centrality measure is simply a way to provide a notion of importance to the nodes within a network, based on a criteria (heuristics) regarding what constitutes importance. This criteria is subjective, and must be decided with an application in mind. The simplest criteria is counting the number of neighbors of each node, this is the so-called degree centrality. However, for many real-world applications (see \cite{vigna2019spectral} and the references therein) we are interested in also considering the importance of each neighbor in the computation of a node's importance. That is the basis for all spectral centralities.

The simplest spectral centrality in standard, pairwise interaction networks is the so-called Eigenvector Centrality. The heuristics behind it is the statement that in a graph $G=(V,E)$, a node's importance is proportional to the importance of its neighbors \cite{estrada2012structure}. Mathematically,
\begin{equation}\label{eq:pairwise-eigcent}
    c_i \propto \sum_{j\rightarrow i} c_j \Rightarrow \lambda \bc = A^T \bc,
\end{equation}
where $A$ is the adjacency matrix of the graph $G$. By the Perron-Frobenius Theorem, if the graph $G$ is connected then $A$ is irreducible, and therefore the existence and uniqueness of a positive eigenvector $\bc$ associated to the spectral radius $\rho(A)$ is guaranteed \cite{meyer2001matrix}. This theorem has provided support for a plethora of spectral centralities in standard complex network theory, out of which the Eigenvector Centrality is the paradigmatic example. 


\paragraph{Notions from algebraic hypergraph theory:}
A hypergraph $H = (V, E)$ consists of a set of nodes $V=\{i_1, ..., i_n\}$ and a set of edges $E$. Each hyperedge consists of yet another set (or multiset, as will be discussed in \ref{sec:uplift}) of nodes belonging to $V$. The size of a hyperedge is the number of elements within it. If the hypergraph is weighted, then there exists a function $w: E \rightarrow \mathbb{R}$, and $w(e)$ is the weight of edge $e$. 

We say that a hypergraph is $m$-uniform if all of its hyperedges are of the same size $m$. Note that this subclass of hypergraphs is uncommon in real networks: if we consider the quintessential example of hypergraphs, which is the network of collaboration between scientists, the number of authors (nodes) in each hyperedge (paper) might not always the same. Note also that the case of 2-uniform hypergraphs coincides precisely with networks of pairwise interactions. 

Working with $m$-uniform tensors, if possible, is preferable as there are several analytical tools available for them. Namely, one can define the ``hypergraph adjacency tensor'' $T\in\R^{[m,n]}$, whose components are\footnote{The name ``tensor'' is usually reserved for mathematical objects invariant under coordinate transformations. In our case we are instead referring to multidimensional arrays (or hypermatrices), which we refer to as tensors for the sake of conciseness.}
\begin{equation}
    T_{i_1 ... i_m} = \begin{cases}
    w(\{i_1 ... i_m\}) & \text{if } \{i_1,...,i_m\} \in E\\
    0 & \text{otherwise.}
    \end{cases}, \quad 1 \leq i_1,...,i_m \leq n,
\end{equation}
where $m$ is the ``order'' of the tensor (equivalently, the size of its hyperedges). In the context of undirected hypergraphs, the tensors which we will be discussing will always be symmetrical, meaning that $T_{i_1 ... i_k}=T_{\sigma\{i_1,...,i_m\}}$, for all $\sigma\in \mathfrak{S}_m$, where $\mathfrak{S}_m$ denotes the permutation group of $m$ indices.

The key ingredient connecting the Perron-Frobenius Theorem to the eigenvector centrality of graphs was the relation between the irreducibility of the adjacency matrix and the strong connectivity of the graph. In hypergraphs, generalizations of the Perron-Frobenius Theorem have been put forward for different tensor eigenproblems \cite{qi2017tensor}, which do require a similar connection to be made between irreducibility of the adjacency tensor and strong connectivity of the corresponing hypergraph.

Similarly to the matricial case, we can distinguish between reducible and irreducible tensors, but with a bit of a twist.
\begin{definition}[Irreducible tensor \cite{qi2017tensor}] \label{def:irreducible}
An order-$m$, dimension-$n$ tensor T is reducible if there is a nonempty proper index subset $J \subset [n]$ such that
\begin{equation}
    T_{i_1 ... i_m} = 0,\, \forall i_1 \in J,\, \forall i_2, ... , i_m \notin J.
\end{equation}
If T is not reducible, then it is irreducible.
\end{definition}

Here we used the notation $[n]=\{1,...,n\}$. A number of results have been proven relating connectedness properties of hypergraphs to the irreducibility of this tensor \cite{pearson2014spectral, qi2017tensor}. However, unlike the pairwise case, the intuitive notion of connectedness in a hypergraph does not directly translate to irreducibility of the associated tensor (See Example 2.7 of \cite{chang2013variational}). As it happens, tensor irreducibility is too strong a constraint to fully describe general hypergraphs. 

Instead, strongly connected hypergraphs are described in terms of \textit{weakly} irreducible tensors. 
\begin{definition}[Weakly irreducible tensor \cite{qi2017tensor}] \label{def:weakly-irreducible}
Let $M=(m_{ij})$ be a $n \times n$ non-negative matrix defined by
\begin{equation}
m_{ij} = \sum_{j_3 , ..., j_m = 1}^N |T_{i j j_3 \dots j_m} |.
\end{equation}
Then $T$ is weakly irreducible if and only if $M$ is irreducible.
\end{definition}

This definition is equivalent to the intuitive notion of connectedness, as the graph whose adjacency matrix is $M$ will have an edge between nodes $i,j$ if there is at least a hyperedge containing them. 

A hypergraph is said to be strongly connected if its adjacency tensor is irreducible in the previous sense. Luckily, most of the existence and uniqueness results which will be relevant for us have also been proven for these types of tensors \cite{qi2017tensor}

Before moving on to the next subsection, let us define an ubiquitous operation involving a tensor $T\in\R^{[m,n]}$ and a vector $\bc \in \R^n$, whose contraction produces yet another vector, sometimes called \textit{tensor apply} \cite{benson2019computing} or TTSV1 \cite{aksoy2024scalable}:
\begin{equation}
    \bm{x} = T \bc^{m-1} \Longleftrightarrow  x_{i_1} = \sum_{i_2...,i_m=1}^n T_{i_1 i_2 ... i_m} \bc_{i_2} ... \bc_{i_m}.
\end{equation}

\subsection{Hypergraph spectral centralities: State of the Art}\label{subsec:art}

For simplicity we will now restrict ourselves to $3$-uniform hypergraphs, although the generalization to $k$-uniform hypergraphs is straightforward. The most straightforward generalization of the pairwise Eigenvector Centrality consists of defining functions $f:V\rightarrow \mathbb{R}$ and $g:V\times V \rightarrow \mathbb{R}$, and imposing the equation
\begin{equation}\label{eq:3uniform-fg}
    f(c_i) = \frac{1}{\lambda} \sum_{\{i,j,k\} \in E} g(c_j,c_k).
\end{equation}

The question then would be determining whether $\mathbf{c}=(c_1,...,c_n)^T$ exists and is unique, and if so how could one calculate it. 

So far in the literature three choices of $f,g$ have been considered \cite{benson2019three}, due to their simplicity, sensibility and the existence of Perron-Frobenius-like associated theorems.
\begin{itemize}

    \item \textbf{Clique motif Eigenvector Centrality (CEC)}: In this case $f(c_i) = c_i$ and $g(c_j,c_k) = c_j + c_k$. This choice is the simplest one, for it leads to a standard eigenvector equation, unlike in the next two cases.
    \begin{equation}\label{eq:3uniform-cec}
        c_i = \frac{1}{\lambda} \sum_{\{i,j,k\} \in E} (c_j + c_k).
    \end{equation}
    This is tantamount to considering the standard (pairwise) Eigenvector Centrality of the motif adjacency matrix $W$ of the hypergraph \cite{benson2019three}.  

    The main drawback of this approach is that it hides the higher order nature of the hypergraph, reducing the problem of computing its centrality scores to that of a standard graph with a modified adjacency matrix. 
    
    \item \textbf{$\cZ$-Eigenvector Centrality (ZEC)}: In this case $f(c_i) = c_i$ and $g(c_j,c_k) = c_j c_k$. This choice fully incorporates the higher order nature of the hypergraph by means of a non-linear $g$. 
    \begin{equation}\label{eq:3uniform-zec}
        c_i = \frac{1}{\lambda} \sum_{\{i,j,k\} \in E} c_j \,c_k = \frac{1}{\lambda} \sum_{j,k=1}^n T_{ijk}\, c_j \, c_k\Rightarrow \lambda \bc = 2 T \bc^2.
    \end{equation}

    \begin{remark}
    Note that, if the eigenpair $(\lambda, \bc)$ is a solution to this equation, then for any $\alpha\in \R$ the eigenpair $(\alpha\lambda, \alpha\bc)$ is also a solution. This is problematic, as it means there are infinite eigenvalues. To deal with this, the unitarity condition $\bc^T\bc=1$ is also imposed\footnote{Vectors satisfying $\bc^T \bc=||\bc||_2=1$ (Euclidean norm) are sometimes referred to as $\cZ_2$-eigenvectors in the literature \cite{chang2013uniqueness, qi2017tensor}, as opposed to $\cZ_1$-eigenvectors satisfying $|\bc|_1=1$ ($\ell_1$-norm).}, although it makes the $\cZ$-eigenvector $\bc$ not re-scalable.
    \end{remark}
    
    This equation was proven to always have a positive $\bc>0$ solution, provided the underlying hypergraph is strongly connected \cite{chang2008perron}. Moreover, this definition of eigenvectors, unlike the next one, is invariant under orthogonal transformations \cite{qi2017tensor}, making it physically relevant.
    
    \begin{remark}
    This spectral problem features a different behavior depending on whether the order of the tensor $T$ is even or odd. In the former case, for every eigenpair $(\lambda, \bc)$, the eigenpair $(\lambda, -\bc)$ is also a solution. In the latter case, for every eigenpair $(\lambda, \bc)$, the eigenpair $(-\lambda, -\bc)$ is also a solution. Therefore, in that case the spectral radius does not correspond to a unique eigenvalue, however for centrality purposes we are only interested in the ranking, and we can thus choose to keep just the positive solution.
    \end{remark}

    However, this measure has two important flaws: firstly, the solution is not unique (even after imposing unitarity, see Example 2.7 of \cite{chang2013variational}), and secondly, all computational methods known to converge to a solution are computationally expensive \cite{qi2017tensor}.  
    
    \item \textbf{$\cH$-Eigenvector Centrality (HEC)}: In this case $f(c_i) = c_i^2$ and $g(c_j,c_k) = c_j c_k$. In this case we consider the same non-linear function $g$, but consider a function $f$ which ``dimensionally match'' the right hand side (supposing centrality is measured in some ``unit''). 
    \begin{equation}\label{eq:3uniform-hec}
        c_i^2 = \frac{1}{\lambda} \sum_{\{i,j,k\} \in E} c_j \, c_k = \frac{1}{\lambda} \sum_{j,k=1}^n T_{ijk} \, c_j \,c_k\Rightarrow  \lambda \bc^{[2]} = 2 T \bc^2,
    \end{equation}
    where $\bc^{[2]}$ refers to the Hadamard (componentwise) square of the vector $\bc$. 

    This equation not only is guaranteed to have a positive $\bc>0$ solution if the hypergraph is strongly connected, but said solution is also unique (up to scaling) \cite{chang2008perron}.  
\end{itemize}

We will not further discuss the properties, advantages and disadvantages of each of these methods. The interested reader is referred to \cite{benson2019three} for an in-depth analysis of them in the context of hypergraphs, and to \cite{qi2017tensor} for a general discussion of their mathematical properties, special cases and results.

Up to this point we have limited the discussion to $m$-uniform hypergraphs. However, most real hypergraphs are not uniform, having edges of a variety of sizes. For instance, one of the prime examples of real hypergraphs is that of collaboration networks, where nodes represent researchers and hyperedges represent papers where their authors have collaborated. In this simple example it is clear that the corresponding hypergraph will contain pairwise interactions (papers with two authors), triple interactions (papers with three authors), and so on. 

\subsubsection{Vectorial characterizations}

A first idea on how to characterize the centrality of nodes in a non-uniform hypergraph is introducing the notion of a \textit{vectorial centrality score}. For instance, in the HEC case one could compute the centrality vector $\textbf{c}^{(m)}\in \mathbb{R}^n$ associated to each order $m$ sub-hypergraphs (provided they are all strongly connected), and then assign to each $i$ node a vector $\textbf{v}^i\in \mathbb{R}^{m-1}$ whose $m$-th component corresponds to its centrality score at order $m$, i.e. $v_{m}^i=c^{(m)}_i$. 

The main drawback of the above approach is the fact that the hypergraph will likely not be strongly connected at each order (see, for instance, \ref{fig:uplift}). Not only that, but there interplay between different orders is completely absent. An attempt to palliate both problems was put forward in \cite{kovalenko2022vector}, where they resort to the line graph of a hypergraph (a structure which is proven to be strongly connected if the original hypergraph is as well) to translate the problem to that of hyperedge centrality scores, which can be tackled using standard, pairwise graph theory. These edge centralities are then ``shared'' among the nodes participating in each of them, at each level $k$, conforming a vectorial centrality score.

\subsubsection{Embracing non-uniformity} 

Apart from the vectorial characterizations of the hypergraph centrality, it seems rather natural to wonder whether one can extend the notions of CEC, ZEC and HEC to these non-uniform cases.

Following the traditional heuristics of the Eigenvector Centrality, the most general scenario would then be an equation of the form
\begin{equation}\label{eq:nonuniform-fgg}
    \lambda f(c_{i_1}) = \alpha_2 \sum_{\{i_1,i_2\} \in E} g_2(c_{i_2}) + \alpha_3 \sum_{\{i_1,i_2,i_3\} \in E} g_3(c_{i_2},c_{i_3}) + \dots + \alpha_m \sum_{ \{i_1, \dots ,i_m\} \in E} g_m(c_{i_2}, c_{i_3}, \dots, c_{i_m}).
\end{equation}

If some of the functions $f,g_2,g_3, \dots , g_m$ are non-linear, as in the HEC and ZEC cases, this equation is not known to have a solution in general. Not only that, but taking insight from the $m$-uniform case, it is clear that the choices of said functions will drastically change the existence and uniqueness properties of the solution. Our best shot at making progress in the non-uniform, non-linear case is then returning to the ZEC and HEC cases, and see if there is any way to introduce the non-uniformity there.

\section{The uplift}\label{sec:uplift}

We start with the uniformization of the hypergraph from the bottom up. For that, consider a hypergraph $H=(V,E)$ whose maximum hyperedge size is $M$, and a size $m\geq M$, possibly with multiset hyperedges, i.e. hyperedges where the same node is contained more than once (hypergraphs with such particularity are sometimes referred to as ``multihypergraphs'' \cite{qi2017tensor}). We can turn every hyperedge of size lower than $m$ into that size by adding an auxiliary node\footnote{ This notion of adding extra nodes is already present in other hypergraph-related works, although with completely different purposes: In \cite{zhen2022community}) they call it ``augmentation'' and use it for community detection, in \cite{ouvrard2018adjacency} they call it ``inflation'' and use it for hypergraph polynomials. In either case they use a simpler, unweighted version, which only encodes adjacency and not strength of it, in contrast with our proposal.},  which we name ``$\star$'', possibly multiple times within the same hyperedge.

More precisely, we have the following definition.
\begin{definition}[Uplifted hypergraph at order $m$]
Let $H = (V, E)$ be a hypergraph whose maximum hyperedge size is $M$ and let $m \geq M$. We define the uplifted hypergraph at order $m$ as
\begin{equation}
    \widetilde{H}=(\widetilde{V}, \wE), \quad \text{where} \quad \widetilde{V}=V\cup \{\star\} \quad \text{and} \quad \widetilde{E}=\left\{ e \cup \left(\bigcup_{l=0}^{m-|e|} \{\star\} \right),  \, e\in E \right\}.
\end{equation}
\end{definition}

\begin{remark}
Note that, from a set-theoretic point of view, uplifted hyperedges are multiset objects, i.e. they may contain the auxiliary node $\star$ more than once.
\end{remark}

To exemplify this concept, Figure~\ref{fig:uplift} illustrates the uplifting procedure with a simple case (a hypergraph with two 2-hyperedges and one 3-hyperedge uplifted to order 3 with an auxiliary node). 

\begin{figure}[!ht]
    \centering
    \includegraphics[width=0.7\textwidth]{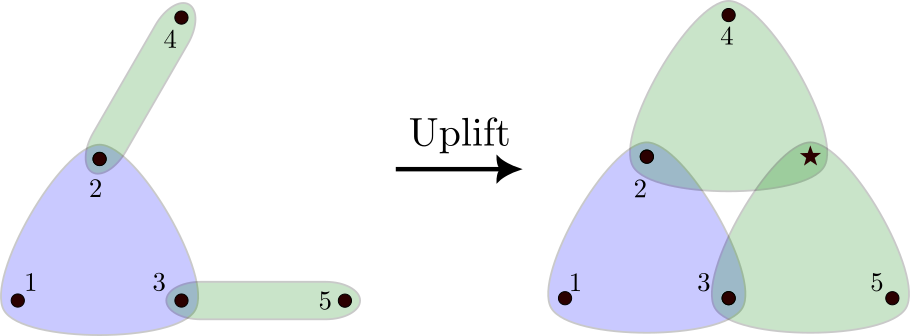}
    \caption{Uplift of the hypergraph $H=(\{1,2,3,4,5\},\{\{1,2,3\},\{2,4\},\{3,5\}\})$ to $\widetilde{H}$.}
    \label{fig:uplift}
\end{figure}

The next step is constructing its associated tensor $\wT$, in particular its components $\wT_{i_1 i_2 ... i_m}$, which will be used in the centrality calculation. In order to do so, one can, naïvely, identify $\star$ as the node $n+1$ and start filling in the entries of the tensor. There's a caveat, though: as we are considering undirected hypergraphs, the tensors at each order are considered symmetrized. Adding the extra node would provide more permutations to each hyperedge than those originally present. We can avoid this double counting by adding suitable combinatorial factors to the hyperedges which have been uplifted.

Taking this into account, we can define the uplifted tensor.
\begin{definition}[Uplifted tensor of a hypergraph] \label{def:uplift-tensor}
Let $H=(V,E)$ be an unweighted, uniform hypergraph and let $\wH=(\widetilde{V}, \wE)$ be its uplift to order $m$. The components of the uplifted tensor $\wT_{i_1 i_2 ... i_m}$ associated to $\wH$ are defined as 
\begin{equation} \label{eq:star-tensor}
    \widetilde{T}_{i_1 i_2 ... i_m} =
    \begin{cases}
    1 & \text{if} \quad \{i_1, i_2, ..., i_m\} \in E, \\
    \dfrac{m^\star (m-m^\star)!}{m!} & \text{if} \quad \{i_1, i_2, ..., i_m\} \notin E \, \wedge \, \{i_1,i_2, ... ,i_m\} \in \wE,\\
    0 & \text{otherwise,}
    \end{cases}
\end{equation}
where $i_s\in \widetilde{V}, \, s = 1,...,m$ and with $m^\star$ being the number of times the auxiliary node $\star$ was added to the original hyperedge $e = \{j_1,...,j_{m-m^*}\}\subset \{i_1,...,i_m\}$ during the uplifting procedure.
\end{definition}

Note that this tensor is weighted (although still non-negative) by construction. An analogous construction could be consider for weighted hypergraph, although we have omitted it for the sake of clarity.

Continuing with the example hypergraph considered in Figure~\ref{fig:uplift}, we would have 
\begin{equation}
    T_{\sigma(123)}=1, T_{\sigma(24)}=1, T_{\sigma(35)}=1 \xrightarrow[]{\rm Uplift} T_{\sigma(123)} = 1, T_{\sigma(24\star)} = 1/3, T_{\sigma(35\star)} = 1/3, 
\end{equation}
where $\sigma(ij)\in\mathfrak{S}_2,\, \sigma(ijk)\in\mathfrak{S}_3$ denote any possible permutation of the indices. This is sensible because, for instance, previously there were two components describing the hyperedge $\{2,4\}$, and now there are 6 describing $\{2,4,\star\}$ but with diminished weight.

We want to stress the importance of this weighting choice, which sets the uplift apart from \cite{ouvrard2018adjacency,zhen2022community}: without it, we would not be able to claim that it appropriately generalizes the uniform HEC, as it would introduce spurious connectivity strengths. This ensures that the total adjacency relations are preserved. In the two cited works the actual strength of the connections is not considered, only the connectivity structure.

If the original hypergraph is strongly connected, the addition of a node to some hyperedges will not hinder the connectivity of the resulting hypergraph. Therefore, in that case it is simple to conclude the strong connectedness of the resulting (uplifted) hypergraph.

\begin{lemma}[Strong connectedness of the uplifted hypergraph] \label{lem:uplifted-connected}
    Let $H=(V,E)$ be a strongly connected hypergraph whose maximum hyperedge size is $M$ and let $m>M$. Then, the uplifted hypergraph $\widetilde{H}$ is strongly connected.
\end{lemma}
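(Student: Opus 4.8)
The plan is to reduce the claim to a statement about an ordinary graph, using the characterisation of strong connectedness through weak irreducibility recalled in Definition~\ref{def:weakly-irreducible}. Concretely, to the uplifted tensor $\wT$ of Definition~\ref{def:uplift-tensor} I would associate the non-negative matrix $\widehat{M}=(\widehat{m}_{ij})$ on the vertex set $\widetilde{V}=V\cup\{\star\}$ given by $\widehat{m}_{ij}=\sum_{j_3,\dots,j_m}|\wT_{ijj_3\cdots j_m}|$. By Definition~\ref{def:weakly-irreducible}, $\wT$ is weakly irreducible (equivalently, $\wH$ is strongly connected) if and only if $\widehat{M}$ is irreducible, i.e. if and only if the graph $G(\widehat{M})$ whose edges are the pairs $\{i,j\}$ with $\widehat{m}_{ij}>0$ is connected. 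Since every nonzero entry of $\wT$ is strictly positive (the combinatorial factor $m^\star(m-m^\star)!/m!$ in Definition~\ref{def:uplift-tensor} is positive), we have $\widehat{m}_{ij}>0$ precisely when $i$ and $j$ co-occur in some uplifted hyperedge of $\wE$. So it suffices to show that this co-occurrence graph $G(\widehat{M})$ on $\widetilde{V}$ is connected.

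I would then verify connectedness in two steps. First, every original hyperedge $e\in E$ survives in $\wE$ as $e\cup\left(\bigcup_{l}\{\star\}\right)$, so any two nodes $i,j\in V$ that co-occurred in a hyperedge of $H$ still co-occur in $\wH$; hence the restriction of $G(\widehat{M})$ to $V$ contains the co-occurrence graph of $H$, which is connected because $H$ is strongly connected. Second, because $m>M\geq|e|$ for every $e\in E$, the number of auxiliary copies $m-|e|\geq m-M\geq 1$ added to each hyperedge is strictly positive, so every uplifted hyperedge contains $\star$. As $H$ is strongly connected it has no isolated node, so every $i\in V$ lies in some hyperedge $e$, whose uplift then contains both $i$ and $\star$; thus $\star$ is adjacent to every vertex of $V$ in $G(\widehat{M})$. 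Combining the two steps, $G(\widehat{M})$ is connected, $\widehat{M}$ is irreducible, and therefore $\wH$ is strongly connected.

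The only genuinely delicate point, which I expect to be the crux, is the role of the strict inequality $m>M$: it is exactly what guarantees that every hyperedge---including the maximal ones of size $M$---receives at least one copy of $\star$, so that $\star$ attaches to all original vertices. Were we to allow $m=M$ (as is permitted in the definition of the uplift itself), an already $M$-uniform $H$ would acquire no $\star$ at all, leaving $\star$ isolated and $\wH$ disconnected; this is precisely why the hypothesis here is $m>M$ rather than $m\geq M$. The remaining work is bookkeeping: confirming that the weights attached to uplifted hyperedges are positive, so that no co-occurrence edge is accidentally erased when forming $\widehat{M}$, and that ``strongly connected'' is used consistently with the weak-irreducibility characterisation of Definition~\ref{def:weakly-irreducible}. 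I anticipate no hidden difficulty beyond these observations.
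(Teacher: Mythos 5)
Your proof is correct and takes essentially the same route as the paper, which in fact states Lemma~\ref{lem:uplifted-connected} without a formal proof, offering only the preceding remark that adding the auxiliary node to some hyperedges cannot hinder connectivity; your reduction to irreducibility of the co-occurrence matrix via Definition~\ref{def:weakly-irreducible} is a rigorous formalization of exactly that sketch. Your observation on the role of the strict inequality $m>M$ --- guaranteeing that every hyperedge, including the maximal ones, receives a copy of $\star$, whereas with $m=M$ an already $M$-uniform $H$ would leave $\star$ isolated --- correctly pinpoints a subtlety the paper glosses over entirely.
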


It is worth noting that, if the original hypergraph is disconnected, then the uplift may connect it. This enhanced connectivity is an artifact of the operation, and even though one will get a unique, well-defined centrality for that hypergraph, it might be of interest to carefully check whether that is preferable in the application/system in mind.

    


\paragraph{An uplifted nuance.}

Given that the uplift produces uniform hypergraphs, one could now think of apply either HEC or ZEC to the uplifted hypergraph, in order to obtain the centrality score of each node $c_i$. It turns out that there is a nuance in the uplifting procedure which impedes its usage in the ZEC case, which is not an issue in the HEC case, as we will now see.

Take, for example, a hypergraph $H$ with only size 2 (pairwise) and 3 (triple) interactions. Suppose we uplift it adding an auxiliary node $\star$ once inside each of the pairwise edges. The aim of this procedure boils down to enabling the following rewriting of the ``tensor apply'' (present in both HEC \ref{eq:3uniform-hec} and ZEC \ref{eq:3uniform-zec}) operation, grouping all the interactions together
\begin{equation}
    \sum_{j=1}^n a^{(2)}_{ij} c_j + \sum_{j,k=1}^n T^{(3)}_{ijk} c_j \, c_k \rightarrow \sum_{j,k=1}^n \wT_{ijk} c_j \, c_k + \sum_{k=1}^n \wT_{i\star k} c_\star \, c_k + \sum_{j=1}^n \wT_{ij\star} c_j \, c_\star = \wT \bc \, \bc.
\end{equation}
However, this involves a sum (the pairwise one) where the centrality of $c_\star$ would be involved. It would thus seem that there is a flaw in using the uplift to obtain centralities of the original hypergraph: they would depend on the centrality of the spurious node $\star$. Luckily, in the HEC centrality this is not the case, as we will show that this is an artifact of the procedure which we can get rid of.  

To see this, consider applying HEC to an uplifted hypergraph $\widetilde{H}$, obtaining a centrality vector $\bc = (c_1,...,c_n, c_\star)^T$. As discussed before, if $\wH$ is strongly connected, $\bc$ is positive and unique up to scaling \cite{chang2008perron}. Therefore, one can always rescale it such that $c_\star=1$. This choice solves the previously mentioned apparent contradiction, as the sums before and after the uplift would now coincide.

Moreover, given that the centrality scores we care about are just those of the ``real'' nodes, one would then just keep the centrality components associated to them, which can then be rescaled again at will (for instance, in order to normalize them). Hence, the initial scaling to achieve $c_\star$ was just a formal consistency check, but it can conveniently be ignored once we have computed the HEC solution.

\paragraph{The ZEC problem.}

Notice that the reason why we could ignore the aforementioned issue in the HEC case is the fact that if $\bc$ is an $\cH$-eigenvector, then $\bc'\propto \bc$ is still a $\cH$-eigenvector. This is not the case for $\cZ$-eigenvectors: they can't be rescaled and still solve the $\cZ$-eigenproblem defined in Subsection \ref{subsec:art} (recall that they are subject to a normalization constraint \cite{qi2017tensor}). 

It would seem that there is no use for the combination of uplift and $\cZ$-eigenvectors. That turns out not to be the case, if we uplift an already $2$-uniform hypergraph to a $(2+m)$-uniform hypergraph, as we will see. From the point of view of computing importance scores this is unnecessary (the ZEC could already be computed in the original hypergraph), but we will see that it plays an important role in the characterization of Perron-like  $\cZ$-eigenvectors for certain types of hypergraphs. This result will, in turn, feed back into the ZEC centrality quite naturally.

Therefore, from now on we will separate the discussion in two parts: on the one hand, if one starts with a non-uniform hypergraph, its uplift can be used to compute HEC-like centralities. On the other hand, if one starts with a uniform hypergraph, its uplift can shed light on properties of certain $\cZ$-eigenproblems.


\section{Uplift + $\cH$-eigenvectors: spectral centrality in non-uniform hypergraphs}\label{subsec:upliftHEC}

We will now particularize what we have been discussing to the case of $\cH$-eigenvectors,. As we mentioned, the main interest of this uplift is the extension of the HEC centrality measure to the case of non-uniform hypergraphs. Given what we know so far, we can already do so.




\begin{definition}[$m$-UHEC] \label{def:m-UHEC}
    Let $H=(V,E)$ be a strongly connected hypergraph whose maximum hyperedge size is $M$ and let $m \geq M$. The $m$-Uplifted $\cH$-Eigenvector Centrality ($m$-UHEC) of the hypergraph $H$ consists of the $n=|V|$ components associated to nodes in $V$ of the HEC of the uplift of $H$ to order $m$.
\end{definition}

For the sake of conciseness and to avoid cluttering the notation, we will from now on refer to the $m$-UHEC as just the UHEC, with the order being clear by the context, or specified otherwise.

Note that if $H$ is already $M$-uniform and $m=M$, then the UHEC and standard HEC vectors coincide. It is straightforward to see that this measure is well-defined in the sense that the UHEC vector is positive and unique (up to scaling), as was the HEC measure.
\begin{theorem}[Existence and uniqueness of the UHEC]
    Under the assumptions of Definition \ref{def:m-UHEC}, the UHEC vector exists and it is unique.
\end{theorem}

\begin{proof}
    Lemma \ref{lem:uplifted-connected} guarantees the strong connectedness of the uplifted hypergraph, and the Perron-Frobenius theorem for strongly connected hypergraphs \cite{chang2008perron} guarantees the existence and uniqueness of its HEC. 
\end{proof}

\subsection{The pairwise case}

The uplift procedure is not restricted to higher-order networks: one can also apply it to pairwise interaction networks. While in real applications there is no obvious reason why one would prefer it to other, well-established, spectral centrality measures, for us it will be interesting to discuss it as a means of comparison with them: if the centrality outcome after the uplift into a hypergraph was considerably different from the centrality outcome of the pairwise eigenvector centrality, that would have signaled a flaw in our approach.

Consider a pairwise interaction graph $G=(V, E)$ with adjacency matrix $A=(a_{ij})$. Uplift it to $\widetilde{H}^3=(\widetilde{N},\widetilde{E})$. Its $3$-uniform tensor can be decomposed as
\begin{equation}
    \widetilde{T}_{i_1 i_2 i_3} = 
    \begin{cases}
        a_{i_1 i_2}/3 & \text{if } i_3=\star \\
        a_{i_2 i_3}/3 & \text{if } i_1=\star \\
        a_{i_1 i_3}/3 & \text{if } i_2=\star \\
        0 & \text{otherwise}.
    \end{cases}
\end{equation}

With this decomposition, one can rewrite the HEC equation \ref{eq:3uniform-hec} as
\begin{align}
    \lambda c_i^2 &= \cancel{\sum_{j,k=1}^n T_{ijk} c_j \, c_k} + \sum_{j=1}^n T_{ij\star} c_j \, c_\star + \cancel{T_{i\star\star} c_\star \, c_\star} = \frac{1}{3}\sum_{j=1}^n a_{ij} c_j \, c_\star, \\
    \lambda c_\star^2 &= \sum_{j,k=1}^n T_{\star jk} c_j \, c_k + \cancel{\sum_{j=1}^n T_{\star j\star} c_j \, c_\star} + \cancel{T_{\star\star\star} c_\star \, c_\star} = \frac{1}{3}\sum_{j,k=1}^n a_{jk} c_j \, c_k .
\end{align}

The second of these equations is not actually relevant: it just ties the value of the auxiliary node based on the scores of the rest. Requiring now the centrality of the auxiliary node to be $c_\star=1$ we end up with 
\begin{equation} \label{eq:HEC-pairwise}
    \lambda c_i^2 = \frac{1}{3}\sum_{j=1}^n a_{ij} c_j,
\end{equation}
which resembles, up to the $c_i^2$, a weighted, undirected version of the Eigenvector Centrality equation \eqref{eq:pairwise-eigcent}. It is therefore natural to compare the centralities obtained through this uplifted measure to those of the standard (pairwise) eigenvector centrality.

We expect to obtain a similar ranking (in the sense of ordering of nodes by importance), although with a lower spread in the actual centrality scores. This is because, loosely speaking, the uplift compresses the centrality scores: the auxiliary node ties every node together, homogenizing the centrality. The most notable thing, is however, the fact that this homogenization can change the actual ranking between the nodes, as can be seen in Figure~\ref{fig:pairwise_graphs}. 

\begin{figure}[!ht]
    \centering
    \includegraphics[width=1\textwidth]{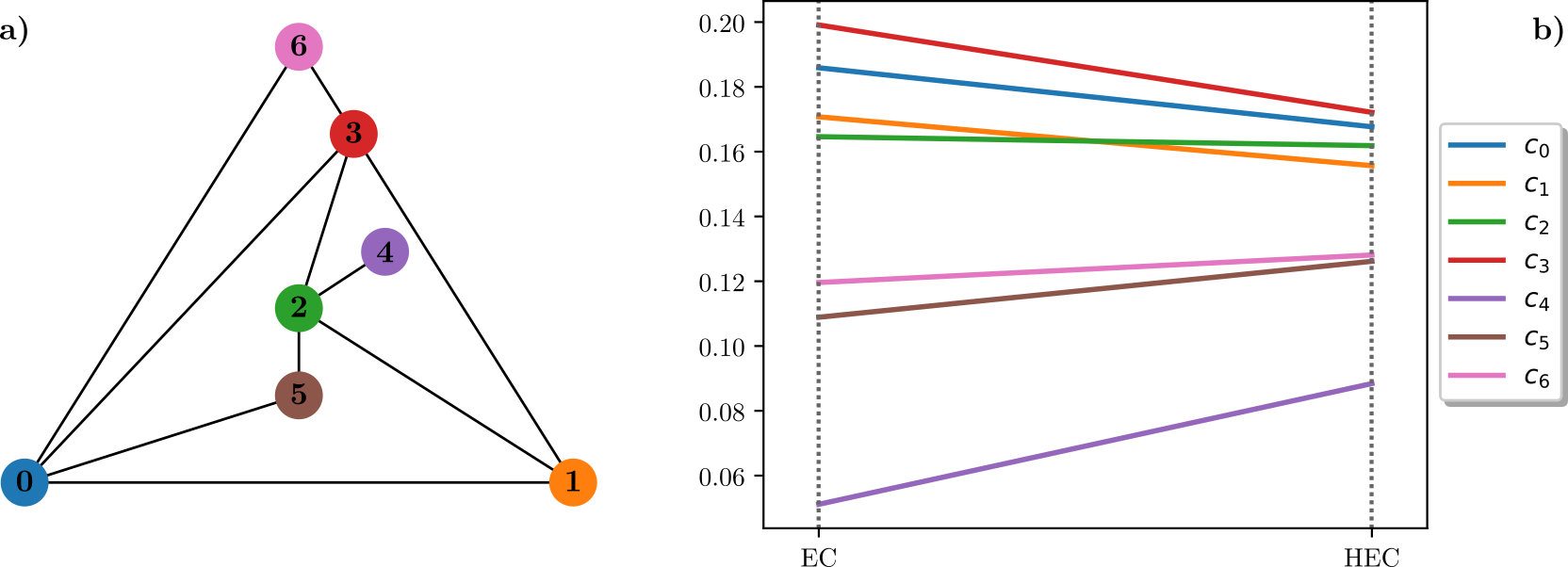}
    \caption{Panel \textbf{a)} shows the pairwise graph $G=(V,E)$ with 7 nodes whose Eigenvector Centrality (EC) and the HEC of its uplift to order 3 (HEC) are calculated. Panel \textbf{b)} shows the centrality values associated to the $\ell_1$-normalized centralities in a parallel coordinate plot. The effect of the homogenization can be clearly seen, as well as the crossing in ranking between nodes $1$ and $2$.}
    \label{fig:pairwise_graphs}
\end{figure}

\subsection{Uplifting and projecting hypergraphs}\label{subsec:both}

So far we have discussed the way to deal with the hyperedges of size lower than the desired one, by means of uplifting those below it. But in order to truly embrace non-uniform hypergraphs we should also consider an operation bringing hyperedges of higher orders down to the desired one. 

The key to this has been hinted at when discussing the Clique motif Eigenvector Centrality, back in Subsection \ref{subsec:art}. There, an order $k$ hyperedge is split into all possible pairwise relations, $C(k,2)= \binom{k}{2} $ of them, between its constituents. In other words, size $k$ edges are projected into sets of size $2$ edges. We can think of an analogous process but turning size $k$ edges into sets of size $p<k$ edges.  

\begin{definition}[Projected hypergraph] \label{def:projection}
    Let $H = (V, E)$ be a hypergraph whose maximum hyperedge size is $M$ and let $2 \leq p < M$. Denote the set of hyperedges of size greater than $p$ as $E'$ and denote the set of all $p$-subsets of every element of $E'$ as $S$. We define the projected hypergraph hypergraph at order $p$ as
    \begin{equation}
        \hH = (V, \hE),\quad  \hE = \left(E \backslash E' \right) \cup S .
    \end{equation}
\end{definition}

In other words, we can break apart each hyperedge of dimension $k$ into $C(k,p) = \binom{k}{p}$ distinct hyperedges of dimension $p$.


Notice that, unlike what we did in the uplift case, we can't as of yet define an associated adjacency tensor, as $\widehat{H}$ will generally still be non-uniform. However, given that this operation entails, essentially, a substitution of each higher size edge by a collection of smaller ones, we need to discuss how to assign weights to the smaller ones generated from the projection. 

If we follow a similar reasoning to the combinatorial one used in the uplift case (see Definition~\ref{def:uplift-tensor}), one ends up with nonsensical weight assignments, particularly it can be calculated to be 
\[
w=\frac {k!}{p!\, C(k,p)}=(k-p)!
\]
For instance, an order 4 hyperedge projected would be projected into order 2 hyperedges with weight $w=2$, hence the resulting hyperedges would have a higher participation than those already at the chosen order. 

Instead, we can go back to Benson's work \cite{benson2019three}, and in particular the CEC calculation, which achieves a sensible projection assigning weights which are the result of counting how many times a pair participates in higher size edges. Our projection aims to generalize this concept, thus the weights come from a similar counting argument  (a $p$-subset's weight will be the number of higher-than-$p$ order edges where the subset participates). 

\paragraph{Joining everything together:}
as we mentioned before, the resulting projected hypergraph $\hH$ might not be uniform, moreover, if we discard orders lower than $p$ we are losing information, as if we uplift hyperedges discarding even higher interactions. For that reason, the key idea in terms of computing centralities is combining both projection of orders higher than a chosen order $p$ and uplifting the lower ones.

\begin{definition}[$p$-UPHEC] \label{def:p-UPHEC}
    Let $H = (V, E)$ be a hypergraph whose maximum hyperedge size is $M$ and let $2 \leq p \leq M$. The $p$-Uplifted-Projected $\cH$-eigenvector centrality ($p$-UPHEC) is the only positive $\cH$-eigenvector of the uniform, weighted hypergraph resulting from 
    \begin{enumerate}
        \item Adding an auxiliary node (or more than on as long as they are indistinguishable) to each hyperedge of size $k < p$ and weighting them with their corresponding combinatorial factor.
        \item Projecting down each hyperedge of size $k > p$ into a set of size $p$ hyperedges, with their corresponding combinatorial factors.
    \end{enumerate}
\end{definition}

As in the UHEC case, for the sake of conciseness we will from now on refer to the $p$-UPHEC as just the UPHEC, where again the order will be clear by the context, or specified otherwise.

It is straightforward to check is the fact that the connectivity of the resulting hypergraph is unchanged.
\begin{lemma}[Strong connectedness of the projected hypergraph] \label{lem:projected-connected}
    Let $H=(V,E)$ be a strongly connected hypergraph and $2 \leq p \leq M$. The hypergraph resulting from uplifting and projecting as in Definition \ref{def:p-UPHEC} is strongly connected.
\end{lemma}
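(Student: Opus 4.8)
The plan is to work with the characterization of strong connectedness in terms of the associated graph introduced via the matrix $M$ in Definition \ref{def:weakly-irreducible}: writing $\Gamma(H)$ for the graph in which two nodes $i,j$ are adjacent precisely when some hyperedge contains both of them, a hypergraph is strongly connected exactly when $\Gamma$ is connected. With this reformulation the lemma reduces to showing that neither the uplift nor the projection can disconnect $\Gamma$. In fact I would prove the stronger statement that every adjacency present in $\Gamma(H)$ survives in the graph $\Gamma$ of the uplifted-projected hypergraph, so that connectivity can only be preserved or enhanced.

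The key step concerns the projection. Let $e \in E$ be a hyperedge with $|e| = k > p$ and let $i \neq j$ be two of its nodes. Since $p \geq 2$ and $k \geq p$, one can always complete $\{i,j\}$ to a $p$-subset of $e$ by adjoining $p-2$ further elements of $e$ (possible because $k-2 \geq p-2$). This $p$-subset lies in the set $S$ of Definition \ref{def:projection}, hence in $\hE$, and carries a positive weight by the counting argument, so $i$ and $j$ remain adjacent in $\Gamma$ after projection. Thus projection replaces each large hyperedge by smaller ones while retaining all pairwise adjacencies among the nodes it contained; it adds edges to $\Gamma$ and removes none among the original nodes (those of size exactly $p$ being left untouched). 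For the uplift part I would reuse the reasoning already invoked for Lemma \ref{lem:uplifted-connected}: inserting the auxiliary node $\star$ into a hyperedge of size $k < p$ keeps all of that hyperedge's original nodes mutually adjacent and merely creates new adjacencies between $\star$ and those nodes, so it too only enlarges $\Gamma$ restricted to $V$.

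Combining these observations, the graph $\Gamma$ of the uplifted-projected hypergraph, restricted to the original vertex set $V$, contains $\Gamma(H)$ as a subgraph. As $H$ is strongly connected, $\Gamma(H)$ is connected, hence so is this restriction. If the hypergraph contains any hyperedge of size $k < p$, then the auxiliary node $\star$ is adjacent to at least one node of $V$ and therefore cannot break connectedness; if no such hyperedge exists, no auxiliary node is introduced and $\Gamma$ already lives on $V$. In either case $\Gamma$ is connected, which is exactly the strong connectedness of the resulting hypergraph.

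I expect the only genuinely delicate point to be the bookkeeping in the projection step — verifying that a prescribed pair can always be extended to a $p$-subset — together with correctly disposing of the degenerate cases where either operation is vacuous (all hyperedges already of size $p$, or all of size $\geq p$ so that no $\star$ is added). None of these presents a real obstacle, so the argument is essentially an assembly of these monotonicity-of-connectivity observations.
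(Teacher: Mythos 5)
Your argument is correct and is essentially the same reasoning the paper relies on: the paper gives no explicit proof of this lemma, asserting it as straightforward on the strength of the observation (made before Lemma \ref{lem:uplifted-connected}, and via the graph underlying Definition \ref{def:weakly-irreducible}) that uplift and projection only add adjacencies to the graph $\Gamma$ whose connectedness characterizes strong connectedness, and your proof simply makes that monotonicity rigorous, including the one non-trivial check that any pair $\{i,j\}$ inside a hyperedge of size $k>p$ extends to a $p$-subset because $k-2 \geq p-2$, so that projection loses no adjacency among original nodes. The only cosmetic omission is that Definition \ref{def:p-UPHEC} allows several indistinguishable auxiliary nodes rather than a single $\star$, but your argument applies verbatim to each of them, since every uplifted hyperedge still contains at least one node of $V$.
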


And once again, we can easily show the consistency of this measure, as was the case with the UHEC.
\begin{theorem}[Existence and uniqueness of the UHEC]
    Under the assumptions of Definition \ref{def:m-UHEC}, the UHEC vector exists and it is unique.
\end{theorem}

\begin{proof}
    Lemmas \ref{lem:uplifted-connected} and \ref{lem:projected-connected} guarantee the strong connectedness of the hypergraph resulting from projecting and/or uplifting, and the Perron-Frobenius theorem for strongly connected hypergraphs \cite{chang2008perron} guarantees the existence and uniqueness of its HEC. 
\end{proof}

Note that there may be different UPHEC solutions associated to different values of the parameter $p$. To see this, consider the following example.

\newpage

\begin{figure}[h!]
    \centering
    \includegraphics[width=0.7\textwidth]{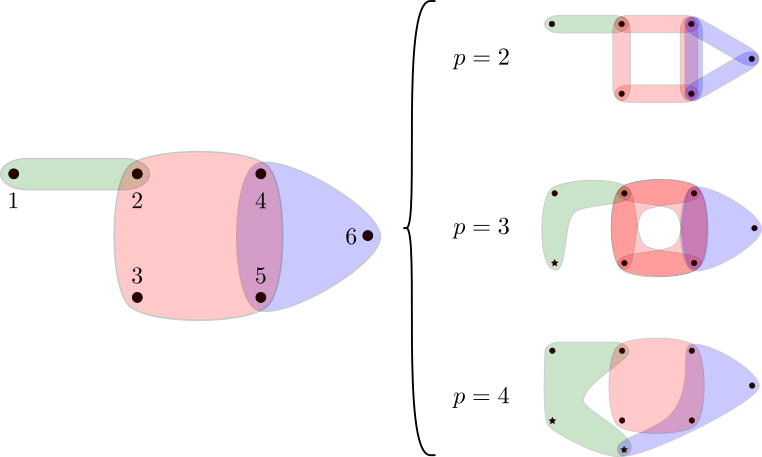}
    \caption{Example hypergraph and its three possible uniformizations at $p=2,3,4$.}
    \label{fig:enter-label}
\end{figure}

\begin{example} \label{ex:UPHEC}
    Let $H=(V, E)$ with $E=\{\{1,2\}, \{2,3,4,5\}, \{4,5,6\}\}$, hence $M=4$. There are three possible UPHEC vectors one can obtain, one for each $p=2,3,4$.
    \begin{itemize}
    
        \item Case $p=2$: This is equivalent to only considering the projection to order 2.
        \begin{equation}
        H' = (V,E'),\enspace \text{with} \enspace E' = \{\{1,2\}, \{2,3\}, \{2,4\}, \{2,5\}, \{3,4\}, \{3,5\}, \{4,5\},\{4,6\}, \{5,6\} \}.
        \end{equation}
        
        \item Case $p=3$: In this case we mix the projection of the second hyperedge and the uplift of the first one, therefore computing the HEC of
        \begin{equation}
        H' = (V',E'),\enspace \text{with} \enspace E' = \{\{1,2,\star\}, \{2,3,4\}, \{2,3,5\}, \{3,4,5\}, \{4,5,6\} \}.
        \end{equation}

        \item Case $p=4$: This is equivalent to only considering the uplift to order 4, i.e. computing the 4-UHEC of
        \begin{equation}
        \wH = (\widetilde{V},\wE),\enspace \text{with} \enspace \wE = \{\{1,2,\star,\star\}, \{2,3,4,5\}, \{4,5,6,\star\} \}.
        \end{equation}
        
    \end{itemize}

    Constructing the respective adjacency tensors and computing their Perron-like $\cH$-eigenvector, we get the normalized centrality scores of Table \ref{tab:example_UHEC}.
    {\renewcommand{\arraystretch}{1.5}%
    \begin{table}[h!]
        \centering
        \begin{tabular}{|C{1.2cm}||C{1.5cm}|C{1.5cm}|C{1.5cm}|C{1.5cm}|C{1.5cm}|C{1.5cm}|}
        \hline
           Case & $c_1$ & $c_2$ & $c_3$ & $c_4$ & $c_5$ & $c_6$ \\
           \hline
           \hline
            $p=2$ & \bf \cellcolor{vermilion} 0.0929 & 0.1802 & 0.1690 & \bf \cellcolor{guppiegreen} 0.2084 & \bf \cellcolor{guppiegreen} 0.2084 & 0.1412 \\ 
            \hline
            $p=3$ & \bf \cellcolor{vermilion} 0.0623 & 0.1949 & 0.1943 & \bf \cellcolor{guppiegreen} 0.2060 & \bf \cellcolor{guppiegreen} 0.2060 & 0.1364 \\
            \hline
            $p=4$ & \bf \cellcolor{vermilion} 0.0853 & 0.1959 & 0.1953 & \bf \cellcolor{guppiegreen} 0.1993 & \bf \cellcolor{guppiegreen} 0.1993 & 0.1250 \\
            \hline
        \end{tabular}
        \caption{Centrality scores for each UPHEC case. Cells highlighted in green show the most central nodes in each case, while cells highlighted in red show the least central nodes. A good consistency check is the fact that the centralities of nodes 4 and 5 are identical in either case, as they are indistinguishable in the original hypergraph $H$.}
        \label{tab:example_UHEC}
    \end{table}
    }

    We can compare this method with the vector centrality one \cite{kovalenko2022vector}, which also distinguishes centralities order by order. The resulting centralities can be found in Table \ref{tab:example_vectorcent}.
    {\renewcommand{\arraystretch}{1.5}%
    \begin{table}[h!]
        \centering
        \begin{tabular}{|C{1.2cm}||C{1.5cm}|C{1.5cm}|C{1.5cm}|C{1.5cm}|C{1.5cm}|C{1.5cm}|}
        \hline
           Order & $c_1$ & $c_2$ & $c_3$ & $c_4$ & $c_5$ & $c_6$ \\
           \hline
           \hline
            $2$ & 0.5 & 0.5 & 0.0 & 0.0 & 0.0 & 0.0 \\ 
            \hline
            $3$ & 0.0 & 0.0 & 0.0 & 0.3333 & 0.3333 & 0.3333 \\
            \hline
            $4$ & 0.0 & 0.25 & 0.25 & 0.25 & 0.25 & 0.0 \\
            \hline
        \end{tabular}
        \caption{Centrality scores yielded by the vector centrality measure at each order.}
        \label{tab:example_vectorcent}
    \end{table}
    }

    Here we see an example where the new measure improves upon existing ones, as it performs a similar task but it is capable of aggregating information of the whole hypergraph structure into each of the evaluated orders, rather than dismissing those which the nodes do not belong to. 
    
    In fact, one can see that one the whole structure is taken into account, in this example there would be no doubt about which is the least important node in the \textit{whole} network and which are two most important ones. If one were to trust the vector centrality at second order, for instance, one could have been deceived into thinking that the first node is of rather remarkable importance. Moreover, the naïve way to combine these orders (summing the scores of each node) would also lead us to think that node 1 is more important than node 3, for example. It should be clear by now that the non-linear treatment is offering us valuable insights.
    
    
\end{example}

\paragraph{Notes on computational complexity:} Before moving to real-world applications, we first want to address the computational cost of the algorithms discussed so far.


Firstly, we need to discuss the creation of the tensor, which will have a different complexity depending on whether we are uplifting or projecting a hyperedge. In the case of the uplift, for every hyperedge that has to be uplifted we add the phantom node the necessary times (linear operation). It gets more complicated in the case of projecting, where we needed to compute all the possible combinations of a hyperedge (factorial operation). Let $H = (V, E)$ by a hypergraph, $m \in \mathbb{N}$ the order we want to transform it to. Let $|E|=|E_u| + |E_m| + |E_p|$, where $|E_u|$ is the number of hyperedges that have to be uplifted, $|E_m|$ is the number of hyperedges already at the desired order and $|E_p|$ is the number of hyperedges that have to be projected. Then, the overall number of operations that have to be done to create this weighted tensor is
\begin{equation}
   \sum_{e_u \in E_u} (m - |e_u|) + \sum_{e_p \in E_p} m \binom{|e_p|}{m} = |E_u|(m - \bar{e}_u) + \sum_{e_p \in E_p} m \binom{|e_p|}{m},
\end{equation}
with $\bar{e}_u$ being the average size of hyperedges that have to be uplifted. 
To compute the Big-O notation we have to choose the worst case scenario, the highest order term. In this case, it will be that associated to the projected edges
\begin{equation}
    \mathcal{O}\left( |E| \cdot m \cdot \binom{ |e| }{m} \right).
\end{equation}

Once we have created the tensor, we now need to compute the eigenvector corresponding to the largest $\cH$-eigenvalue. In order to compute UHEC and UPHEC centralities, instead of creating a new algorithm, we have used a variant of the power method with a weighted tensor (see \cite{ng2009finding}). 

\subsection{Numerical comparisons}

A first attempt to generalize adjacency tensors in a non-uniform context was provided by \cite{banerjee2014spectra} (and later glossed over by Benson in \cite{benson2019three}), which goes by the name hyperedge ``blowups'' \cite{aksoy2024scalable}. This method relies on suitably duplicating indices in the adjacency tensor to accommodate to higher order hyperedges, and it has recently been computationally improved so as to avoid its high computational cost when it comes to the tensor apply operation \cite{aksoy2024scalable}. However, and as \cite{qi2017tensor} already points out, there is some indeterminacy in this approach.

We will nevertheless consider the original (and only) proposal \cite{banerjee2014spectra} which, given a hyperedge $e=\{v_1\dots v_s\}$ with $2 \leq s \leq m$ nodes (where $m$ is the maximum cardinality of the hyperedges), assigns it the $m$-uniform adjacency tensor components
\begin{equation}\label{eq:alternative_uniformization}
    a_{i_1 \dots i_m}= \frac{s}{\alpha} \quad \text{where} \quad \alpha = \sum_{p_1,\dots p_r=1}^n \frac{m!}{p_1! p_2! \dots p_s!}.
\end{equation}
and $i_1,\dots, i_m$ are chosen in all possible ways from $\{v_1,\dots, v_r\}$. The construction of this tensor is already disadvantageous. Time complexity of this uplifting method can be directly found by the intuitive idea behind it. Let's say we want to uplift the hyperedge $e$ to order $m$. To do so, we will need all the possible combinations of adding each node to it, until we reach the desired order. Increasing the order by 1 would take $|e|$ operations (add each node to the hyperedge once). Increasing the order by 2, we would need to do $|e^2|$ operations (the mentioned before, and for each new hyperedge constructed, add each of the original nodes). It's straightforward that the time complexity we are talking about is $\mathcal{O}\left(|e|^{m-|e|}\right)$ for each hyperedge to be uplifted. Nevertheless, this time complexity can be reduced through dynamical optimization to $\mathcal{O}\left(|e|(m-|e|)\log m\right)$. The method proposed in this paper to uplift a hyperedge involves far fewer operations, having $\mathcal{O}\left({m-|e|}\right)$ for each hyperedge, as the only thing it is being done is adding a new node the necessary times. Moreover, this alternative uniformization does not include a notion of projection, which is why we have to supplement it with ours if one is interested in checking intermediate orders.


We now want to give a flavour of the difference between the different tensorial methods discussed throughout this manuscript, namely: the standard HEC (equation \ref{eq:3uniform-hec}), the UPHEC (Definition~\ref{def:p-UPHEC}) and the alternative uniformization method (equation~(\ref{eq:alternative_uniformization})), at each of the different orders present in a hypergraph\footnote{We will not be discussing the ZEC here, as it was already done in \cite{benson2019three} and it does not have an UPHEC analogue, as discussed throughout the text.}, first in the case of real data and later for synthetically-generated hypergraphs.

\subsubsection{Real-world hypergraph datasets:}

As further evidence of the interest and usefulness of the proposed method, some real world hypergraph datasets are  taken now  into consideration, by analyzing three different points: firstly, how do the two hypergraph uniformizations discussed (our UPHEC and blowups \cite{banerjee2014spectra,aksoy2024scalable}) compare against the original, order-by-order analysis of the hypergraph put forward by Benson \cite{benson2019three}. Secondly, what is the difference between both uniformizations in these real cases. And thirdly, even within either of these uniformizations, one needs to choose at which order to perform the analysis (uplifting lower orders and projecting higher ones in our method, ``blowing up'' lower orders and also projecting higher ones in the blowup one.

It is important to note that the besides the figures shown in the present manuscript, which have been picked due to their clarity and aid in the exposition, we have perform a wider analysis of more hypergraphs (all of them freely available within the XGI library \cite{landry2023xgi}), which can be found in the open repository at \url{https://github.com/LaComarca-Lab/non-uniform-hypergraphs}.

To start things off, let us consider two hypergraphs: a quintessential one, the \texttt{tags\_ask\_ubuntu} dataset \cite{benson2018simplicial}, also used in \cite{benson2019three} to showcase the CEC, ZEC, and HEC proposals, and the \texttt{hypertext-conference} one \cite{isella2011what}. The former contains information about interactions within the \textit{Ask Ubuntu StackOverflow} forum. Specifically, it can be seen as a hypergraph where nodes represent tags and hyperedges between tags represent questions asked marked with those tags. The latter contains data gathered during the \textit{ACM Hypertext 2009} conference, pertaining the interactions between its participants.

Some basic statistics of these hypergraphs (after pre-processing them with the XGI library \cite{landry2023xgi} in order to remove isolated nodes, singleton edges, etc) can be observed in Table~\ref{tab:askubuntu}. Note that when studying each uniform order as isolated some nodes will become disconnected if they have no such interactions.

{\renewcommand{\arraystretch}{1.5}%
\begin{table}[h!]
    \centering
    \begin{tabular}{|c|ccc|ccc|}
        \hline
         & \multicolumn{3}{c|}{\texttt{tags\_ask\_ubuntu} \cite{benson2018simplicial}} & \multicolumn{3}{c|}{  \texttt{hypertext-conference} \cite{isella2011what}} \\
        \hline
        Order & Nodes & Hyperedges & Size of LCC & Nodes & Hyperedges & Size of LCC\\
        \hline
        \hline
        2 & 2714 & 28134 & 89.84\%  & 113 & 2103 & 100\%\\
        \hline
        3 & 2821 & 52282 & 93.38\% & 105 & 302 & 92.92\%\\
        \hline
        4 & 2722 & 39158 & 90.10\% & 11 & 12 & 9.73\%\\
        \hline
        5 & 2564 & 25475 & 84.87\% & 8 & 7 & 7.08\%\\
        \hline
        6 & - & - & - & 8 & 4 & 7.08\%\\
        \hline
        \hline
 Complete & 3021 & 145053 & 100\% & 113 & 2434 & 100\%\\
        \hline
    \end{tabular}
    \caption{Number of nodes and hyperedges at each order of the hypergraphs, as well as the size of the Largest Connected Component (LCC) containing them compared to the total. There is a small discrepancy between the sum of hyperedges at each order and the total number, due to the fact that some hyperedges become disconnected if we remove other orders and therefore they won't appear in the LCC}
    \label{tab:askubuntu}
\end{table}
}

The natural way to compare rankings is by means of some correlation measure which only takes into account the ordinal correlation between the entries (i.e. their position within the ranking) rather than their actual magnitudes. One of the best known examples of this measure is Kendall's tau correlation coefficient ($\tau\in[0,1]$, where the closer to 1 the more correlated), which we will compute between every pair of rankings.

Before showing the actual results, we should mention that in order to compare two rankings, they must contain the same number of elements. However in the uniformized vs non-uniformized cases this is not the case (the non-uniformized, i.e. standard HEC versions only keep the LCC with those interactions). For that reason we have chosen to fill the empty entries with a zero value, as they do not participate in such order. It is here that we can already glimpse at the issue with the standard, non-uniformized HEC: if we look at Table~\ref{tab:askubuntu} we can see that, while this may be sensible in the \texttt{tags\_ask\_ubuntu} dataset, in the \texttt{hypertext-conference} the LCC of order beyond 3 is a minuscule part of the entire hypergraph. For this reason, the ranking will be localized around those nodes, yielding $\tau\approx 0$.

The results of the comparison between each of the rankings are shown in Figure~\ref{fig:KT-double_heatmap}. At this stage, we will focus on the first of the questions described before: the comparison of either uniformization with the non-uniform approach per order.

\begin{figure}[h!]
    \hspace{-0.7in}
    \includegraphics[width=1.25\textwidth]{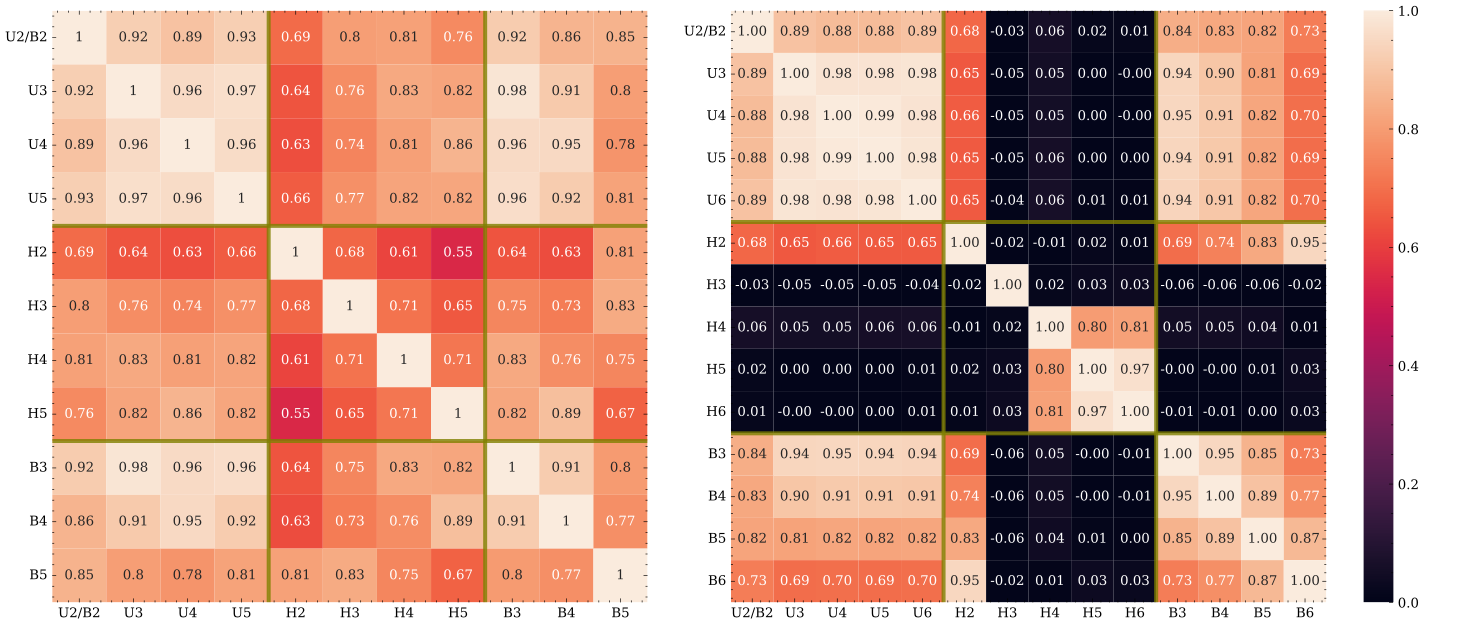}
    \caption{Kendall's tau correlation coefficient between the whole rankings obtained in each of the method, for the \texttt{tags\_ask\_ubuntu} (left) and \texttt{hypertext-conference} (right) datasets. Methods are labelled as U2, U3, U4, U5 for the UPHEC case; H2, H3, H4, H5 for the HEC at each order, and B3, B4, B5 for the blowup uniformization discussed above. U2 and B2 are equal as they only include the projection, and are therefore shown together.}
    \label{fig:KT-double_heatmap}
\end{figure}

In the \texttt{tags\_ask\_ubuntu} dataset, the four standard HEC measures among themselves have the lowest correlations. The lowest correlation of the whole Figure is actually that between these 2nd and 5th orders. This is product of the fact that the uniform hypergraphs at each order have little to do with each other, they each describe a portion of the whole.

As we advanced before, this is much more drastic in the \texttt{hypertext-conference} dataset: there almost every correlation yields a number close to zero, except for the one between orders 5 and 6, as the LCC's of these orders share the same 8 nodes (see Table~\ref{tab:askubuntu}).

This analysis makes it clear that there is a need for uniformized versions of the HEC centrality, as the order-by-order study of hypergraphs clearly lacks a cohesive description of the whole\footnote{ For a visual analogy of what is going on, check the cover of the first edition of ``Gödel, Escher, Bach: an Eternal Golden Braid'', by Douglas R. Hofstadter \cite{hofstadter1979godel}.}

Having dealt with the question of uniform measures versus non-uniform ones, we now shift our focus to the other two problems: the comparison between uplift and blowups, and the order to inspect. In order to get a better understanding, we supplement the previous examples with other four real hypergraph datasets, also available in XGI, whose most basic statistics (this time without an order-by-order overview) after preprocessing are summarized in Table \ref{tab:other-datasets}.
{\renewcommand{\arraystretch}{1.5}%
\begin{table}[h!]
    \centering
    \begin{tabular}{|c|ccc|cc|}
        \hline
        Dataset &  Nodes & Hyperedges & Maximum order & $\langle\tau_{UU}\rangle$ & $\langle\tau_{BB}\rangle$\\
        \hline
        \hline
        \texttt{tags\_ask\_ubuntu} \cite{benson2018simplicial} & 3021 & 145053 & 5 & 0.960 & 0.825 \\
        \hline
        \texttt{hypertext-conference} \cite{isella2011what} & 113 & 2434 & 6 & 0.982 & 0.844 \\
        \hline
        \texttt{contact-primary-school} \cite{stehle2011high} & 242 & 12704 & 5 & 0.962 & 0.905 \\
        \hline
        \texttt{contact-high-school} \cite{mastrandrea2015contact} & 327 & 7818 & 5 & 0.946 & 0.863 \\
        \hline
        \texttt{sfhh-conference} \cite{cattuto2010dynamics} & 403 & 10541 & 9 & 0.918 & 0.748 \\
        \hline
        \texttt{diseasome} \cite{goh2007human} & 516 & 314 & 11 & 0.724 & 0.590 \\
        \hline
    \end{tabular}
    \caption{Number of nodes, hyperedges and maximum order of each hypergraph (after removing isolated nodes and duplicated edges and keeping the LCC), as well as average Kendall-tau coefficient between the UPHECs $\langle\tau_{UU}\rangle$ and the blowups $\langle\tau_{BB}\rangle$ at each order between 3 and the respective maximum order.}
    \label{tab:other-datasets}
\end{table}
}

\newpage

For each of these hypergraphs, the correlation between the UPHEC and blowup+projection measures have been computed  in Figure~\ref{fig:KT-multi_heatmap}, now ignoring the non-uniform measures for ease of visualization\footnote{In the \texttt{diseasome} case we have also restricted the computation to orders up to 9 for better visualization.}, as well as the the top-most column and left-most row (the ``U2/B2'' ones in Figure \ref{fig:KT-double_heatmap}, for they correspond to only projecting any higher-order interaction to pairwise ones, and computing the HEC of the corresponding, uniform hypergraph. As there are no uplifts nor blowups, there is no distinction on the uniformization used, which is why we choose to ignore them at this point.

\begin{figure}[h!]
    \hspace{-0.5in}
    \includegraphics[width=1.15\textwidth]{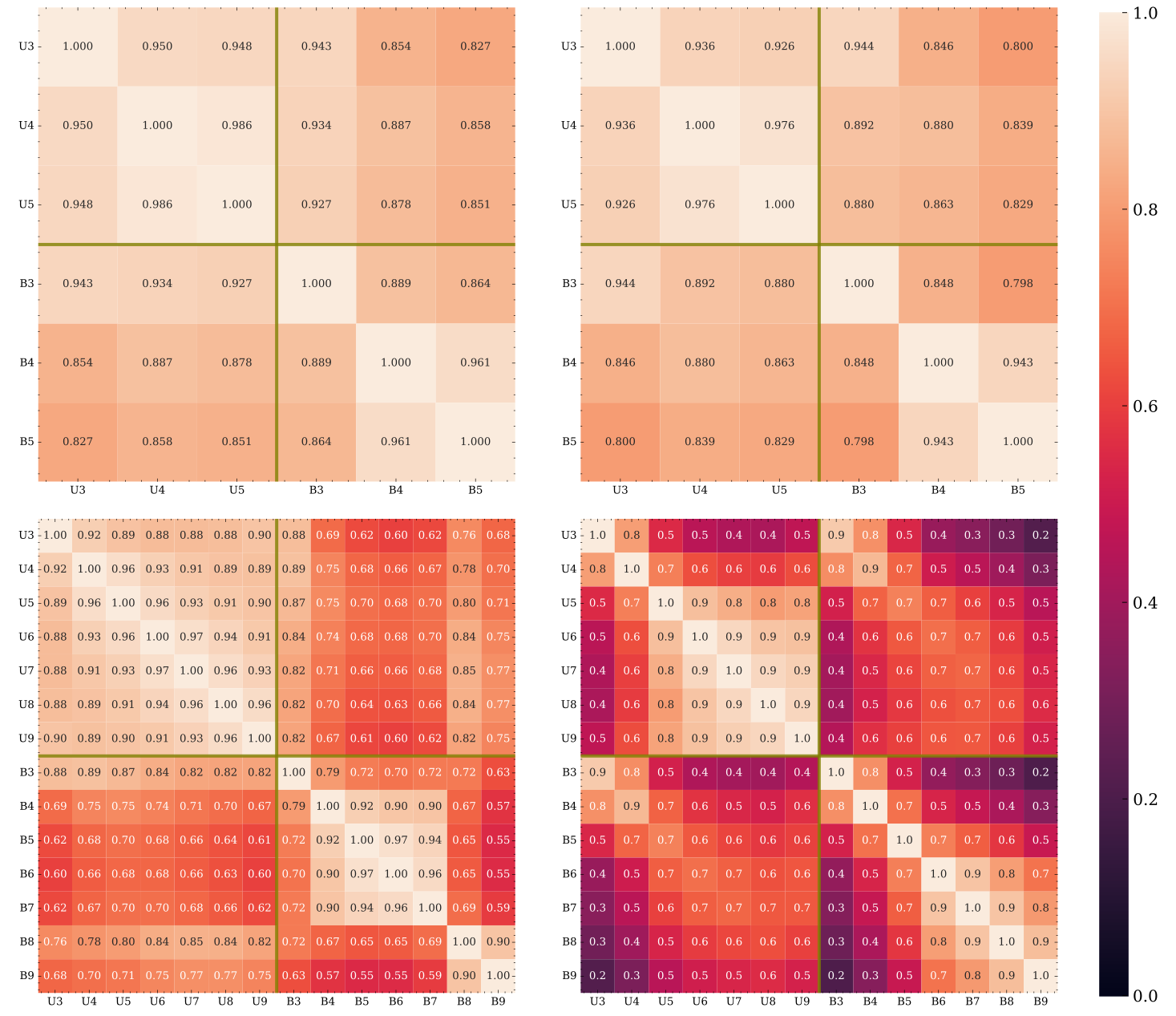}
    \caption{Kendall's tau correlation coefficient between the whole rankings obtained by the UPHEC and blowup uniformizations (with the same labelling conventions as in Figure~\ref{fig:KT-double_heatmap}) for the \texttt{contact-primary-school} (top-left), \texttt{contact-high-school} (top-right), \texttt{sfhh-conference} (bottom-left) and \texttt{diseasome} (bottom-right) datasets.}
    \label{fig:KT-multi_heatmap}
\end{figure}

We can draw the following conclusions from Figures~\ref{fig:KT-double_heatmap} and \ref{fig:KT-multi_heatmap}, with respect to both uniformization procedures.

Firstly, the average correlation between the same type of uniformizations at different orders (i.e. the U-U, B-B quadrants) is always higher in the uplift than in the blowup cases. For reference, the average value of off-diagonal correlations in those two quadrants, for each hypergraph, are shown in Table~\ref{tab:other-datasets}. In that regard, it is interesting to look at in the \texttt{sfhh-conference} example: the lowest correlation between any two UPHEC measures is found to be around 0.88, while the lowest in the blowup uniformization is around 0.55.

Moreover, one can clearly see that the higher the order inspected, the more disparity. This is pointing out to the fact that, while in lower orders the projection part of the measure (which is the same in both the UPHEC and in the blowup uniformization) is evening the rankings, when we focus on the highest order (thus only having vanilla uplift and blowup, no projection) the blowup is computing something slightly different. In that sense, this seems to confirm the claim in \cite{qi2017tensor} about the blowup uniformization and the fact that it contains a degree of arbitrariness in the augmentation, something which indeed observe.

Apart from the choice of uniformization procedure, we wanted to understand the implications of the choice of order at which to inspect the hypergraph. Focusing on the UPHEC method, what we can see is that in most examples the choice is basically irrelevant: once we take into account every level of interaction (either through projection or uplift), a centrality unison emerges, something we can clearly see from $\langle\tau_{UU}\rangle \approx 1$. Nevertheless, the correlation is better at higher orders, meaning that the more uplift and less projection, the more agreement in the description of the overall hypergraph.

At this point it is important to also consider the computational cost of each of these methods. As we have discussed, ideally one would want to compute the centrality with UPHEC at the highest order available. However, it might be preferable to have a balance between uplift and projection, staying therefore at intermediate orders. Alternatively, and if computational efficiency is a necessity, one could use the method proposed in \cite{aksoy2024scalable}, which achieves a remarkable speed increase in the computation of the blowup, turning an $\mathcal{O}(n^M)$ problem into one being polynomial in $M$, the maximum order.






Apart from the full ranking comparison, it is often interesting to understand how does the correlation change when we contrast the top $K$ nodes obtained with a method with their corresponding ranking according to another method, as we increase the amount $K$ of nodes sampled. For the sake of simplicity we will show this with the \texttt{tags\_ask\_ubuntu} case, as the results are similar in the rest of the datasets.

Given the amount of possible comparisons (12 in the case of UPHEC-UPHEC, 16 in the cases of UPHEC-HEC, etc), we have decided to filter out most of them in order to present a meaningful figure. In particular, for each measure comparison we have chosen to keep at most correlations: the correlation reaching the highest maximum, the correlation reaching the lowest minimum, and the two correlations whose average is minimum and maximum. We feel that these conditions will provide us with a set of correlations which can convey more information (in the sense of most similar and dissimilar rankings). The resulting plot is displayed in Figure~\ref{fig:KT-topK}, and the unfiltered one can be found in the open repository available at \url{https://github.com/LaComarca-Lab/non-uniform-hypergraphs}.

\begin{figure}[h!]
    \hspace{-0.3in}
    \includegraphics[scale=0.63]{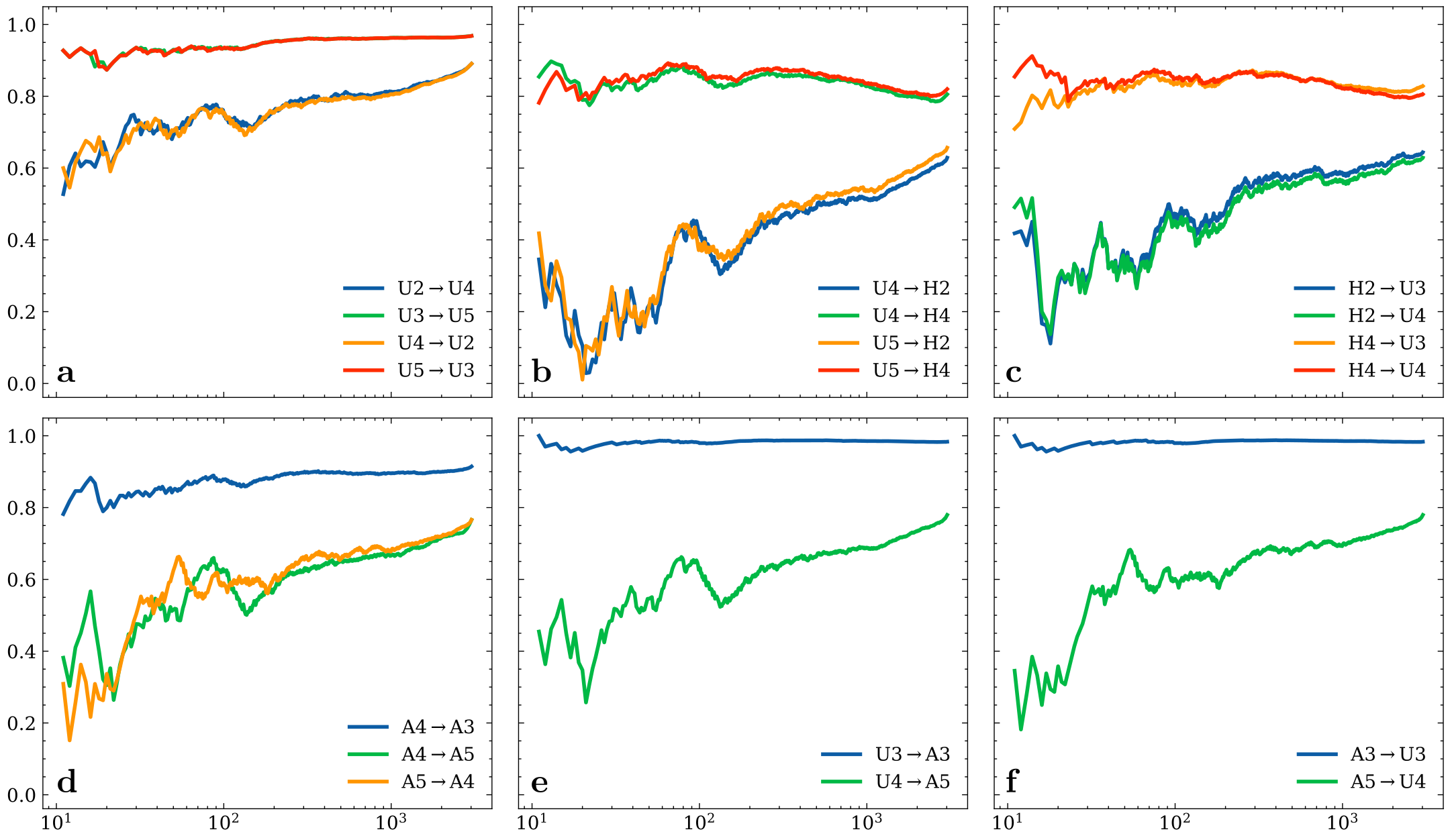}
    \caption{Kendall's tau correlation coefficient (log scale) between the top $K$ ranked nodes from one method with the same nodes from another method on the Ask Ubuntu co-tagging dataset. Panels \textbf{a}, \textbf{b}, \textbf{c}, \textbf{d}, \textbf{e} and \textbf{f} represent the UPHEC-UPHEC, UPHEC-HEC, HEC-UPHEC, Alternative-Alternative, UPHEC-Alternative and Alternative-UPHEC, respectively.}
    \label{fig:KT-topK}
\end{figure}

We see that despite some initial fluctuations around $K=100$, most correlations tend to increase or stabilize, converging to their respective values shown in Figure~\ref{fig:KT-double_heatmap}. We also notice that in most cases the minimums are reached in pairs, e.g. U2 and U4 are not very correlated with each other in Subfigure 1\textbf{a} in either direction, which is rather sensible.

\subsubsection{Synthetic hypergraphs}
In this subsection we introduce some experiments that will help confirming the robustness of the proposed method. To do so, we use several synthetic hypergraphs, not only as a source of information, but also to prove that the new proposed measure works as expected in any kind of hypergraph, regardless of the domain to which the hypergraph belongs.

We use the natural extension of the Erd\H{o}s-R\'enyi graph generation model to nonuniform hypergraphs, as provided by \cite{dewar2018}. In the pairwise case, this method works by choosing $p_2$, which expresses the probability of any two nodes $v_1$ and $v_2$ forming the edge $(v_1, v_2)$. In the extension, to generate a \textit{random non-uniform hypergraph} with $n$ nodes and hyperedge sizes in $\{2, ..., m\}$, we provide probabilities $(p_1, ..., p_m)$, where each $p_i$ expresses the probability of forming $i$-hyperedges between any $i$ nodes in the hypergraph, and generate them accordingly. 

The generated hypergraphs used here have $n = 100$ nodes and the hyperedge sizes range from 2 to 5. We selected $p_2 = 0.1 > \log(n)/n$ such that we can ensure that the generated hypergraph is strongly connected and $p_5 = 10^{-6}$ such that each generated hypergraph has approximately $100$ $5$-hyperedges.

By using fixing these parameters, we iterate through $p_3$ and $p_4$, assuring that we would have around $150 \text{ and }200$ $3$ and $4$-hyperedges as the minimum, and around $1500 \text{ and } 2000$ as the maximum, respectively. For this purpose, both $p_3$ and $p_4$ were taken from two equally spaced ranges, each split into eight equally spaced parts with $p_3 \in (10^{-3}, 10^{-2})$ and $p_4 \in (5\cdot 10^{-5}, 5\cdot 10^{-4})$.

In this way, we have 64 possible combinations of $(p_3, p_4)$ to generate hypergraphs. We generated $20$ hypergraphs for each $4$-tuple $(p_2, p_3, p_4, p_5)$. For each of these 1280 hypergraphs, we computed the $k$-UPHEC with $k \in \{2, 3, 4, 5\}$ and computed Kendall's tau for all combinations of measures, as shown in Figure \ref{fig:synhtetic-correlation}.

 \begin{figure}[h]
     \centering
     \includegraphics[width=1\textwidth]{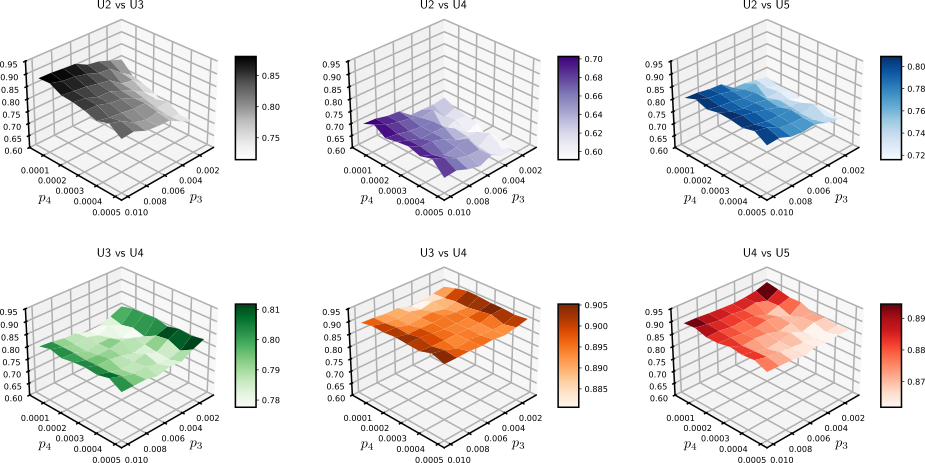}
     \caption{Average Kendall's tau correlation coefficient between the rankings obtained for each of the generated hypergraphs. ``U$x$ vs U$y$'' represents the correlation between the UPHECs at orders $x$ and $y$.}
     \label{fig:synhtetic-correlation}
 \end{figure}

As we can see in Figure \ref{fig:synhtetic-correlation}, the correlations can be split into two disjoint behaviors, the upper and lower rows. The upper row shows the proportional growth in the correlation with $p_3$. We discuss the effects of $p_4$ on the different images in this row. Having noted that both $p_2$ and $p_5$ are fixed, if we compare ``U2 vs U3'', the growth of $p_3$ is proportional to the growth of the correlation. One can easily identify that the reason behind this is that having more $3$-hyperedges will compensate for the difference in size between $4$ and $5$-hyperedges and $2$-hyperedges. In addition, having more $3$-hyperedges makes the $3$-UPHEC closer to the original hypergraph. The opposite occurs in the case where we introduce more $4$-hyperedges; we will not only have the difference between the $2$-hyperedges, but also with $3$-hyperedges.
    
Note that in the case ``U2 vs U4'' we can only see a proportional growth with $p_3$, but not the inverse proportional growth with $p_4$ as much as before. It is trivial to compute the $4$-UPHEC; introducing $4$-hyperedges will have a positive effect, as it introduced $3$-hyperedges in the previous case. Here, $p_4$ also compensates for the effect of $2$ and $4$-UPHEC on the $5$-hyperedges, as does (and did before) $p_3$. Finally, if in the ``U2 vs U5'' case,  the effect of $p_4$ appears to be irrelevant, but we detect a strong effect of $p_3$, as $3$-hyperedges are again acting as an intermediate between the $2$ and $5$-UPHEC.


Now that we have discussed the obtained computations on synthetic hypergraphs, we can conclude that regardless of the differences for different pairs $(p_3, p_4)$, the proposed uniformization keeps track of the centrality measures, where each node has a similar ranking position in the different uniformizations; that is, the importance of each node is relatively preserved.


\section{Uplift + $\cZ$-eigenvectors: an uniqueness sufficient condition}\label{subsec:upliftZeigs}

As we discussed previously, the ZEC centrality can't be computed when uplifting a non-uniform hypergraph, as the different sums can only be grouped together if we are able to rescale the centrality such that $c_\star=1$, which we can't if we are considering $\cZ$-eigenvectors. However, this is not an issue if we start from a $2$-uniform hypergraph (in other words, a pairwise graph).

In fact, $\cZ$-eigenvectors allow us to generalize the uplift operation to having more than one different auxiliary node\footnote{This generalization was already possible in the HEC case, however in that case it only cluttered the notation and hampered the calculation, as the computational complexity scales with the number of \textit{distinct} nodes involved. Note also that in that case further conditions would be required for a well-defined uplift, as in order to be able to scale the centrality such that $c_{\star_k}=1\,\forall k$, we need all of them to be indistiguishable from each other, i.e. they must be related by permutation.}, e.g. $\star_1,...,\star_k$.

\begin{definition}[Multi-Uplifted hypergraph]
Let $H = (V, E)$ be an $M$-uniform hypergraph and let $m \geq M$. We define the multi-uplifted hypergraph at order $m$ with $s$ auxiliary nodes, each contained $p_k$ times within each hyperedge as
\begin{equation}
    \widetilde{H}=(\widetilde{V}, \wE), \quad \text{where} \quad \widetilde{V}=V\cup \{\star_1, ..., \star_s\} \quad \text{and} \quad \widetilde{E}= \left\{ e \cup \left( \bigcup_{k=1}^{s}  \bigcup_{l=1}^{p_k}  \{\star_k\} \right), \, e\in E \right\},
\end{equation}
with $\displaystyle\sum_{k=1}^s p_k = m - M$.
\end{definition}


As we previously declared, this operation on 2-uniform (standard) graphs allow us to relate the $\cZ$-eigenvectors of the adjacency tensor of hypergraphs to those of their original, standard graph. To see this, consider adding two different auxiliary nodes $\star, \times$ to a graph $G$ with adjacency matrix $A=(A_{ij})$. This operation translates into the following rewriting:
\begin{equation}
    \sum_{j=1}^n A_{ij} c_{j} \rightarrow \sum_{j,k,l=1}^{n,\,\star,\times} \wT_{ijkl} c_{j} \, c_{k} \, c_{l} = {\binom{3}{2}} \sum_{k,l=1}^{n} \wT_{ij\star\times} c_{j} c_{\star} \, c_{\times},
\end{equation}
where the notation $\sum_{i=1}^{n,\star,\times}$ indicates summing over the index $i\in [ n ] \cup \{\star,\times\}$.

Given that, by definition, $\wT_{ij\star\times}= \frac{1}{12} A_{ij}$, the $\cZ$-eigenvector equation of the uplifted 4-uniform hypergraph is equivalent to the $\cZ$-eigenvector equation of the original 2-uniform hypergraph, which reduces to the standard eigenvector centrality of the graph:
\begin{equation}
    \lambda \bc = \wT \bc^3, \quad \bc = (c_{1},...,c_n,c_{\star},c_{\times})^T  \Longleftrightarrow \lambda' \bc' = A (\bc')^2, \quad \lambda'=\frac{4\lambda}{c_{\star} c_{\times}}, \quad \bc' = (c_{1},...,c_{n})^T.
\end{equation}

We can extrapolate this example to the uplift from a $2$-uniform hypergraph to an $(2+l)$-uniform hypergraph, as stated in the following theorem.

\begin{theorem}[Correspondence between $\cZ$-eigenvectors] \label{thm:Zcorrespondence}
    Let $\wH=(\widetilde{V},\widetilde{E})$ be a strongly connected, $(2+l)$-uniform hypergraph with $l \geq 1$. If there is a non-empty subset of nodes $V^\star=\{\star_{1}, ... \star_{s}\} \subset \widetilde{V}$, each contained $\{p_1,...,p_s\}$ times, respectively, in every hyperedge, such that $\sum_{i=1}^s p_{i}=l$, then, 
    \begin{itemize}
        
        \item The components of the $\cZ$-eigenvectors of $\widetilde{H}$ associated to the nodes $n\in V= \widetilde{V} \backslash V^\star $  correspond to those of the $2$-uniform hypergraph $H=(V,E)$ having those $s$ nodes removed.
                
        \item The components of the positive $\cZ$-eigenvectors of $\widetilde{H}$ associated to the auxiliary nodes $n\in V^\star$  are uniquely determined by the other components.
        
        \item The $\cZ$-eigenvalues $\widetilde{\lambda}$ of $\wH$ correspond to those of the $2$-uniform hypergraph $H$, $\lambda$, rescaled as
        \begin{equation}
            \wl = \lambda \Omega (\widetilde{c}_{\star_1})^{p_1} ... (\widetilde{c}_{\star_s})^{p_s}, \quad \Omega =   \frac{(l+1)!}{\displaystyle\prod_{i=1}^s (p_i!)}.
        \end{equation}

    \end{itemize}
\end{theorem}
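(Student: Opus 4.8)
The plan is to expand the defining $\cZ$-eigenvector equation $\wl\, \bc = \wT \bc^{1+l}$ for a positive eigenpair and read off the three assertions by separating the indices into the ``real'' block $V = \widetilde{V}\setminus V^\star$ and the auxiliary block $V^\star$. Existence of such a positive eigenpair is not in question: since $\wH$ is strongly connected, the Perron--Frobenius theorem for (weakly) irreducible tensors \cite{chang2008perron} supplies a positive eigenvector, and the odd/even sign ambiguity of the order-$(2+l)$ problem is removed by keeping the positive representative. The whole content is therefore to compute the contraction $(\wT \bc^{1+l})_i$ in the two cases $i \in V$ and $i \in V^\star$, exploiting the fact that every hyperedge of $\wH$ has the form $\{a,b\}\cup A^\star$ with $\{a,b\}\in E$ and $A^\star$ the fixed multiset containing each $\star_k$ exactly $p_k$ times.

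First I would fix a real index $i = a \in V$. Because $\wT$ is symmetric and nonzero only on the uplifted hyperedges, the only surviving terms come from completing $a$ to a hyperedge $\{a,b\}\cup A^\star$; the remaining $1+l$ slots must then be filled by the multiset $\{b\}\cup A^\star$, which can be ordered in exactly $\Omega = (l+1)!/\prod_k p_k!$ ways. Each ordering carries the same uplift weight $w$ (Definition~\ref{def:uplift-tensor}, generalized to several auxiliary nodes) and the factor $\wc_b \prod_k \wc_{\star_k}^{p_k}$, so summing over $b$ yields
\begin{equation}
    \wl\, \wc_a = \Omega\, w \left(\prod_{k=1}^s \wc_{\star_k}^{p_k}\right) \sum_{b} A_{ab}\, \wc_b .
\end{equation}
Hence the restriction $\wc|_V$ is a positive eigenvector of the adjacency matrix $A$ of the $2$-uniform $H$, with eigenvalue $\wl\big/\big(\Omega\, w \prod_k \wc_{\star_k}^{p_k}\big)$. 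Since $A\ge 0$ and $\wc|_V>0$, that eigenvalue must equal the spectral radius $\lambda=\rho(A)$ and $\wc|_V$ must be the associated Perron vector; this is precisely the first bullet, and isolating the scalar prefactor gives the eigenvalue rescaling of the third bullet, with $\Omega$ (together with the normalization of $\wT$) accounting for the constant.

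For the auxiliary block I would repeat the count with $i = \star_k$. Now one $\star_k$-slot is consumed, so the $1+l$ free slots carry the multiset $\{a,b\}\cup(A^\star\setminus\{\star_k\})$, and collecting the resulting combinatorial constant together with the ``edge energy'' $S=\sum_{\{a,b\}\in E}\wc_a\wc_b$ produces a relation of the schematic form $\wl\,\wc_{\star_k}^{2} = C_k\, S \prod_{j}\wc_{\star_j}^{p_j}$. Substituting the value of $\wl$ already obtained from the real block cancels the product $\prod_j \wc_{\star_j}^{p_j}$ and leaves $\wc_{\star_k}^{2}=C_k S/(\lambda\Omega)$, whose unique positive root expresses $\wc_{\star_k}$ solely through the real components (via $\lambda$ and $S$), proving the second bullet. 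The main obstacles are bookkeeping rather than conceptual: getting the symmetrization multiplicities and the weight $w$ exactly right so that the factors collapse to the stated $\Omega$, and justifying that the reduced matrix problem is genuinely of Perron type. The latter is where I would be most careful, since the strong connectedness of $\wH$ is glued together through the universal auxiliary nodes and does not by itself transfer to irreducibility of $A$; here I would lean on the \emph{guaranteed} positivity of the $\cZ$-eigenvector of $\wH$ together with the standard fact that a nonnegative matrix admitting a positive eigenvector has that eigenvalue equal to its spectral radius.
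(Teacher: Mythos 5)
Your proposal is correct and follows essentially the same route as the paper's proof: expand the $\cZ$-eigenproblem of $\wT$, split the indices into the real block and the auxiliary block, count the $(l+1)!/\prod_k p_k!$ orderings of the remaining slots to obtain $\Omega$, reduce the real-node equations to the matrix eigenproblem $\lambda \bc = A\bc$ (isolating the rescaling $\wl = \lambda\,\Omega\,\prod_k \wc_{\star_k}^{p_k}$), and pin down each $\wc_{\star_k}$ as the unique positive root of the quadratic obtained by multiplying its equation by $\wc_{\star_k}$ and substituting $\wl$. Your one refinement --- noting that strong connectedness of $\wH$ does \emph{not} by itself transfer to irreducibility of $A$ (the auxiliary nodes can glue together components of $G$), and instead invoking the fact that a nonnegative matrix admitting a positive eigenvector has that eigenvalue equal to $\rho(A)$ --- is a point the paper's proof glosses over, and is worth keeping.
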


\begin{proof}
    Under the conditions stated, $\widetilde{H}$ can be viewed as an uplift of the hypergraph $H$ with $s$ auxiliary nodes, each one contained equally in each and every hyperedge. The $\cZ$-eigenvector equation for the uplifted hypergraph can be written as
    \begin{align}
        \wl \widetilde{c}_{i_1} = \sum_{i_2,...,i_{2+l}=1}^{n,\,\star_1,...,\star_s} \wT_{i_1,...,i_{2+l}}\, \widetilde{c}_{i_2} \, ...  \, \widetilde{c}_{i_{2+l}} = \Omega \sum_{i_2=1}^{n} T_{i_1,i_2}\, \widetilde{c}_{i_2} \,(\widetilde{c}_{\star_1})^{p_1} ... \, (\widetilde{c}_{\star_s})^{p_s},
    \end{align}
    where we have summed over the auxiliary nodes, recovering the pre-uplifted tensor $T$ components times a combinatorial factor $\Omega$, product of the symmetry of the adjacency tensor. We now carefully calculate this factor.
    \begin{enumerate}
        \item In the equation for node $i$, there will be a sum over $2+l-1=l+1$ indices (1 corresponds to its real $j$-th neighbor, $l$ to the auxiliary nodes added). We will have all possible $(l+1)!$ permutations.

        \item We need to subtract the repetitions of auxiliary nodes, given by their multiplicities $p_i$.
    \end{enumerate}
    Having both of these facts considered we can easily calculate it to be
    \begin{equation}
        \Omega =   \frac{(l+1)!}{\displaystyle\prod_{i=1}^s (p_i!)}.
    \end{equation}
    
    The centralities of these auxiliary nodes can now be pulled out of the sum, obtaining 
    \begin{equation}
        \lambda \widetilde{c}_{i_1} = \sum_{i_2=1}^{n} T_{i_1,i_{2}} \widetilde{c}_{i_2}, \quad \wl = \lambda \Omega (\widetilde{c}_{\star_1})^{p_1} ... (\widetilde{c}_{\star_s})^{p_s},
    \end{equation}
    where we have already re-scaled the $\cZ$-eigenvalue accordingly. Noticing that $T=A$ with $A$ being the adjacency matrix of $G$, we arrive at the equation $\lambda \bc = A\bc$, which is precisely the eigenvector equation of the 2-uniform hypergraph (pairwise graph) $G$. 

    Therefore, the first $n$ components of the $\cZ$-eigenvector of the uplifted hypergraph $\wH$ correspond to the eigenvector $\bc$ associated to $G$.

    It is left for us to discuss the behavior of the remaining equations, one per auxiliary node. Without loss of generality, we consider the equation of node $\star_1$:
    \begin{equation}
        \wl \widetilde{c}_{\star_1} = \sum_{i_2,...,i_{2+l}=1}^{n,\,\star_1,...,\star_s} \wT_{\star_1,...,i_{2+l}}\, \widetilde{c}_{i_2} \, ...  \, \widetilde{c}_{i_{2+l}} = \Omega \sum_{i_1,i_2=1}^{n} T_{i_1,i_2}\, \widetilde{c}_{i_1} \, \widetilde{c}_{i_{2}} \,(\widetilde{c}_{\star_1})^{p_1 - 1} ... \, (\widetilde{c}_{\star_s})^{p_s}.
    \end{equation}

    Multiplying both sides by $c_{\star_1}$ and then replacing $\wl$ in term of $\lambda$, we obtain the following expression
    \begin{equation}
        \lambda (\widetilde{c}_{\star_1})^2 = \sum_{i_1,i_2=1}^{n} T_{i_1,i_{2}} \widetilde{c}_{i_1} \,\widetilde{c}_{i_2}.
    \end{equation}

    Therefore, each component associated to an auxiliary node is uniquely determined (up to a sign, although we can always choose the positive solution) by the components of the non-auxiliary nodes. 
\end{proof}

\begin{remark}
    We have omitted the norm constraint required for $\cZ_1$ or $\cZ_2$-eigenvectors. We are allowed to do so because we uplift a pairwise graph: eigenvectors of the adjacency matrix can be re-scaled as will, therefore the first $n$ components of the $\cZ$-eigenvector of the uplifted hypergraph $\wH$ can be matched to a specific scaling of the eigenvector of the adjacency matrix of $G$.

    Note that this is the reason why this theorem can't be generalized to an uplift from an $m$-uniform hypergraph to an $(m+l)$-uniform hypergraph: even though the $\cZ$-eigenvector equations can be related to each other, in general their norm constraints will be incompatible.
\end{remark}


For illustrative purposes we provide an example which can be analytically solved, following the uplift on Figure \ref{fig:Z-uplift}.

\begin{figure}[h!]
    \centering
    \includegraphics[width=0.6\textwidth]{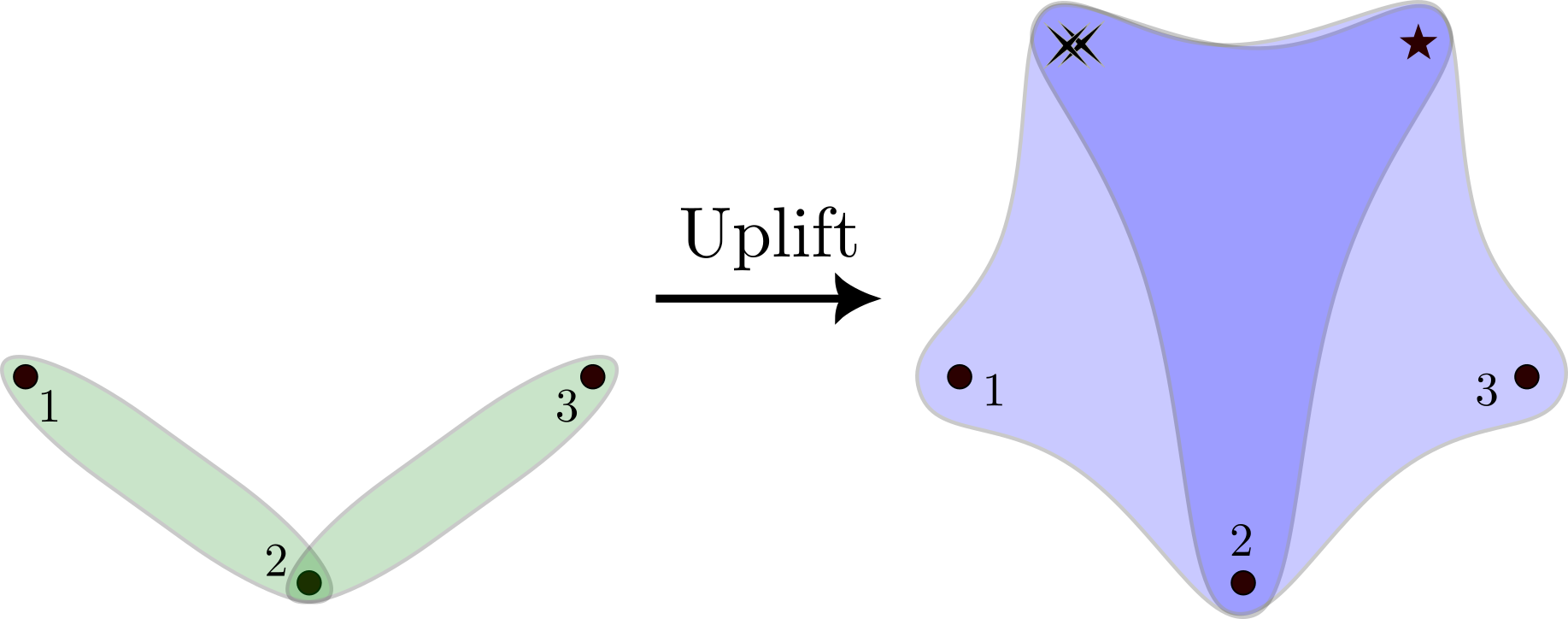}
    \caption{Uplift of a 2-uniform hypergraph (a pairwise graph) to 5-uniform by adding two nodes $\star,\times$ to each hyperedge, the latter being added twice (depicted as there being two of them for illustrative purposes).}
    \label{fig:Z-uplift}
\end{figure}

\begin{example}
Consider the graph $G=(V,E)$ with nodeset $V=\{1,2,3\}$ and edgeset $E = \{\{1,2\},\{2,3\}\})$. It can be seen as a 2-uniform hypergraph $H \simeq G$. Suppose we uplift it to a 5-uniform hypergraph $\wH$, adding auxiliary nodes $\star$ and $\times$ in each hyperedge, the former once and the latter two times, i.e.
\begin{equation}
    \wH = (\widetilde{V}, \widetilde{E}), \quad \widetilde{V} = \{1,2,3,\star,\times\}, \quad \widetilde{E} = \{\{1, 2, \star, \times, \times\}, \{2, 3, \star, \times, \times\}\}, 
\end{equation}
as shown in Figure~\ref{fig:Z-uplift}.

The first thing we would need to do is re-scaling the adjacency matrix into the hypergraph tensor with suitable combinatorial factors (as in Definition~\ref{def:uplift-tensor}). However, here we can omit this step, as this factor is the same for all components. This implies that it will only modify the $\cZ$-eigenvalue, but that is something we will already compute.

The $\cZ$-eigenvector equation of $\wH$ decouples into three distinct ones:
\begin{itemize}
    \item Three equations for the centrality of the original nodes ($i\in\{1,2,3\}$),
    \begin{equation}
        \lambda c_i = \sum_{j,k,l,m} T_{ijklm} c_j \, c_k \, c_l \, c_m = \frac{4!}{2!} \sum_{j=1}^n T_{ij\star\times\times} c_j \, c_\star \, c_\times^2 = 12 c_\star \, c_\times^2\sum_{j=1}^n  A_{ij} c_j,
    \end{equation}
    where this combinatorial factor is the product of fixing 4 indices, out of which 2 are repeated.

    \item An equation for the centrality of the auxiliary node $\star$.
    \begin{equation}
        \lambda c_\star = \sum_{j,k,l,m} T_{\star jklm} c_j c_k c_l c_m 
        = \frac{4!}{2!} (T_{\star 12\times \times } c_1 + T_{\star 2 3 \times \times} c_3)\,c_2\,c_\times^2 =  12 (A_{12} c_1 + A_{23} c_3) \, c_2 \, c_\times^2.
    \end{equation}
    
    \item An equation for the centrality of the auxiliary node $\times$.
    \begin{equation}
        \lambda c_\times = \sum_{j,k,l,m} T_{\times jklm} c_j c_k c_l c_m = 4! (T_{\times 12\star \times } c_1 + T_{\times 2 3 \star \times} c_3)\,c_2\,c_\star\, c_\times = 24(A_{12} c_1 + A_{23} c_3) \, c_2 \, c_\star \, c_\times.
    \end{equation}
\end{itemize}

If rescale $\lambda' = \lambda/(12 c_\star c_\times^2)$, we have that the first of them becomes $\lambda' \bc = A \bc $; in other words, it is the eigenvector equation of the adjacency matrix of the original graph $G$. As it is connected, we are guaranteed to have a unique, positive solution $\bc>0$. 

The remaining equations are then (almost) completely fixed, as after re-scaling the eigenvalue leads to
\begin{equation}
    \lambda' c_\star^2 = (A_{12}c_1+A_{23}c_3)\,c_2, \quad 
    \lambda' c_\times^2 = 2 (A_{12}c_1+A_{23}c_3)\,c_2.
\end{equation}
which not only enforces $\sqrt{2}c_\star=c_\times$ but also guarantees their positivity, as $A_{12}=A_{23}$ and $\bc>0$. 

There is yet a subtlety to take into account: even though the Perron eigenvector $\bc$ can be rescaled as $\bc'=\alpha \bc$, the $\cZ$-eigenvector including $c_\star,c_\times$ cannot, it requires some normalization ($\bc^T\bc=1$ or $|\bc|_1=1$), which will force upon your solution the suitable value of $\alpha$.

With all these taken into account, we find the unique, positive solution $\tilde{c} = (\bc, c_\star, c_\times)^T$ to the problem to be 
\begin{equation}
    \cZ_1:  \, \tilde{c} = \frac{1}{4+2\sqrt{2}} \left(1, \sqrt{2}, 1, \sqrt{2}, 2 \right)^T; \quad
    \cZ_2:  \, \tilde{c} = \frac{1}{5} \left(1, \sqrt{2}, 1, \sqrt{2}, 2\right)^T.
\end{equation}

\end{example}

We can finally obtain the following sufficient condition for existence and uniqueness of certain $\cZ$-eigenvectors of tensors.
\begin{corollary}[Sufficient condition for the existence of the Perron-like $\cZ$-eigenvector]
    Let $T$ be a symmetric tensor of order $m>2$. If its associated hypergraph $H$ is strongly connected and can be seen as an uplift from a pairwise graph $G$, then a Perron-like $\cZ$-eigenvector of $T$ (i.e. a unique, positive $\cZ$-eigenvector) is guaranteed to exist.
\end{corollary}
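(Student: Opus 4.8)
The plan is to read this statement as an almost immediate consequence of Theorem~\ref{thm:Zcorrespondence}. By hypothesis $T$ is the adjacency tensor of a strongly connected, order-$m$ hypergraph $H$ that arises as an uplift of a pairwise graph $G$; writing $m=2+l$ with $l=m-2\geq 1$, this means precisely that $H$ carries a non-empty set of auxiliary nodes $V^\star=\{\star_1,\dots,\star_s\}$, each appearing a fixed number of times $p_k$ in every hyperedge with $\sum_k p_k = l$, and that deleting them recovers the edges of $G$. These are exactly the hypotheses of Theorem~\ref{thm:Zcorrespondence}, so I would begin by spelling out this identification and checking that $m>2$ indeed forces $l\geq 1$, so the correspondence theorem applies verbatim.

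With the hypotheses in place, I would invoke the correspondence to transport the whole $\cZ$-eigenproblem of $T$ onto the ordinary eigenproblem of the adjacency matrix $A$ of $G$. Concretely, Theorem~\ref{thm:Zcorrespondence} shows that the components of any $\cZ$-eigenvector of $H$ indexed by the real nodes $V=\widetilde{V}\setminus V^\star$ solve $\lambda\bc = A\bc$, that the components indexed by $V^\star$ are then uniquely pinned down (and positive) by the relation $\lambda\,\wc_{\star_k}^{\,2}=\bc^\top A\,\bc$ for each auxiliary node, and that the $\cZ$-eigenvalue $\wl$ is a fixed positive rescaling of $\lambda$ by $\Omega\prod_{k}\wc_{\star_k}^{\,p_k}$. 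Thus a positive $\cZ$-eigenvector of $T$ exists and is unique exactly when $A$ admits a positive eigenvector that is unique up to scaling, the residual scaling freedom being removed afterwards by the $\cZ_1$ or $\cZ_2$ normalization. The Perron--Frobenius theorem for matrices supplies precisely this conclusion provided $A$ is irreducible, i.e. provided $G$ is connected.

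The step I expect to be the real obstacle is therefore the passage from strong connectedness of $H$ to connectedness of $G$. This is delicate because the uplift inserts each $\star_k$ into \emph{every} hyperedge, so in the connectivity matrix $M$ of Definition~\ref{def:weakly-irreducible} every auxiliary node is adjacent to all real nodes; two distinct components of $G$ could then be bridged through an auxiliary node, meaning strong connectedness of $H$ does not combinatorially entail connectedness of $G$. I would resolve this at the level of eigenvectors rather than paths: since $H$ is strongly connected, the Perron--Frobenius theorem for hypergraphs \cite{chang2008perron} already guarantees a strictly positive $\cZ$-eigenvector of $T$, and by the correspondence its real components form a strictly positive eigenvector of $A$, which forces that eigenvalue to be the spectral radius of $A$. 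To then pin down \emph{uniqueness} one genuinely needs $A$ irreducible, so the cleanest route is to take connectedness of $G$ as the operative content of the hypothesis and note that Lemma~\ref{lem:uplifted-connected} furnishes the converse implication (connected $G$ uplifts to strongly connected $H$). Assembling the pieces then finishes the argument: the Perron eigenvector of $A$ is positive and unique up to scale, the auxiliary components are uniquely determined from it, and the normalization constraint fixes the overall scale, yielding a unique positive $\cZ$-eigenvector of $T$, which is the asserted Perron-like object.
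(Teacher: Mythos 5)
Your core argument is the same as the paper's: the published proof is a one-liner that invokes Theorem~\ref{thm:Zcorrespondence} to reduce the $\cZ$-eigenproblem of $T$ to the ordinary eigenproblem of the adjacency matrix $A$ of $G$, applies the matrix Perron--Frobenius theorem to $A$, and notes that the auxiliary components are uniquely fixed (after choosing positive roots) by $\lambda\,\wc_{\star_k}^{2}=\bc^{T}A\,\bc$, with the $\cZ_1$/$\cZ_2$ normalization removing the residual scaling freedom. Up to that point your write-up matches the paper step for step.

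Where you diverge --- and this is the valuable part --- is in the final paragraph: you notice that the paper's proof tacitly assumes $A$ is irreducible (it asserts Perron--Frobenius ``guarantees the existence and uniqueness of the eigenvector $\bc$ of the graph $G$''), whereas strong connectedness of $H$ does not entail connectedness of $G$. Your suspicion is correct and can be made concrete: take $G$ with edges $\{1,2\}$ and $\{3,4\}$ and uplift to the $3$-uniform $H$ with hyperedges $\{1,2,\star\}$ and $\{3,4,\star\}$. Since $\star$ lies in every hyperedge, the matrix $M$ of Definition~\ref{def:weakly-irreducible} is irreducible, so $H$ is strongly connected; but $A$ is reducible with a two-dimensional eigenspace at $\rho(A)=1$, and by the correspondence each positive vector $a(1,1,0,0)^{T}+b(0,0,1,1)^{T}$ with $a,b>0$ extends, after computing $c_\star$ and normalizing, to a distinct positive normalized $\cZ$-eigenvector of $T$. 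So uniqueness fails, and the hypothesis as literally stated is insufficient; your repair --- taking connectedness of $G$ as the operative hypothesis, with Lemma~\ref{lem:uplifted-connected} supplying the converse implication --- is the right one, and is strictly more careful than the paper's own proof. Your intermediate observation is also sound as far as it goes: a strictly positive eigenvector of a nonnegative $A$ does force its eigenvalue to equal $\rho(A)$ (Collatz--Wielandt), but, as you say, this cannot substitute for irreducibility when uniqueness is at stake, which is precisely why the connectivity assumption must be placed on $G$ rather than on $H$.
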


\begin{proof}
    This follows directly from the Perron-Frobenius theorem, as it guarantees the existence and uniqueness of the eigenvector $\bc$ of the graph $G$ and the fact that the remaining (auxiliary nodes) equations fix uniquely (after choosing their positive values) these components in terms of $\bc$. 
\end{proof}

The only thing left for us to discuss is the connection between this multilinear algebra result, and our original perspective, which was that of hypergraph centralities. But making this leap is rather evident.
\begin{corollary}[Sufficient condition for the uniqueness of ZEC]
    If a hypergraph $H$ is strongly connected and can be seen as an uplift from a pairwise graph $G$, then it has a unique Perron-like $\cZ$-eigenvector.
\end{corollary}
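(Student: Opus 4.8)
The plan is to observe that this statement is simply the preceding corollary re-read in the vocabulary of centrality: the ZEC of a hypergraph is, by its very definition in Subsection~\ref{subsec:art}, the positive and suitably normalized $\cZ$-eigenvector of the hypergraph's adjacency tensor $T$. Thus ``$H$ has a unique Perron-like $\cZ$-eigenvector'' and ``$H$ has a unique ZEC'' are two phrasings of the same assertion, and the substantive work has already been done in establishing the existence-and-uniqueness corollary above.

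First I would spell out that the hypothesis ``$H$ can be seen as an uplift from a pairwise graph $G$'' is exactly the structural condition required by Theorem~\ref{thm:Zcorrespondence}: there is a set of auxiliary nodes, each occurring the same number of times in every hyperedge, so that $T$ is the uplifted tensor of the adjacency matrix $A$ of $G$. With that identification in place, I would invoke the correspondence theorem to split the $\cZ$-eigenproblem of $T$ into (i) the plain eigenproblem $\lambda\bc = A\bc$ on the real nodes and (ii) the auxiliary-node equations, which the theorem shows fix those components uniquely (taking positive roots) in terms of $\bc$.

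Next I would apply the Perron--Frobenius theorem to $A$: it yields a unique positive eigenvector $\bc$, hence a unique positive solution on the real nodes, and therefore --- through the auxiliary equations --- a unique positive $\cZ$-eigenvector of the whole tensor. The scaling freedom of $\bc$ absorbs the $\cZ_1$ or $\cZ_2$ normalization constraint (as noted in the remark following Theorem~\ref{thm:Zcorrespondence}), so imposing the norm selects a single representative and uniqueness of the ZEC follows.

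The step I expect to require the most care is not the algebra but the hypothesis on connectivity. Perron--Frobenius applies to $A$ only when $A$ is irreducible, i.e. when $G$ itself is connected, whereas the statement is phrased in terms of strong connectedness of $H$. Because the auxiliary nodes sit inside every hyperedge, they can glue together real-node components that were disconnected in $G$, so strong connectedness of $H$ need not by itself force $G$ to be connected. I would therefore either read ``uplift from a pairwise graph $G$'' as carrying the assumption that $G$ is connected, or supply a short argument that the eigenproblem $\lambda\bc = A\bc$ still admits a unique positive solution under the stated hypotheses; pinning this down is the one point that deserves an explicit remark rather than a one-line appeal to the earlier corollary.
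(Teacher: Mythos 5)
Your proposal is correct and takes essentially the same route as the paper, which offers no separate argument for this corollary: it treats it as an immediate restatement, in centrality language, of the preceding corollary, whose proof is exactly your chain --- Theorem~\ref{thm:Zcorrespondence}, then Perron--Frobenius applied to the adjacency matrix $A$ of $G$, then the auxiliary-node equations fixing the components $c_{\star_k}$ uniquely once positive roots are chosen.

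Your closing caveat, however, is not merely a point of care: it is necessary, and the paper glosses over it. Strong connectedness of $H$ (weak irreducibility of $\wT$, Definition~\ref{def:weakly-irreducible}) does \emph{not} imply connectedness of $G$, precisely because the auxiliary nodes sit in every hyperedge. Concretely, take $G$ with edges $\{1,2\}$ and $\{3,4\}$ and uplift to the $3$-uniform $H$ with hyperedges $\{1,2,\star\}$ and $\{3,4,\star\}$; the matrix $M$ of Definition~\ref{def:weakly-irreducible} is irreducible because $\star$ shares a hyperedge with every node, so $H$ is strongly connected, yet with the weights of Definition~\ref{def:uplift-tensor} the $\cZ$-eigenvector equations read
\begin{equation*}
\lambda c_1 = \tfrac{2}{3}\, c_2\, c_\star,\quad \lambda c_2 = \tfrac{2}{3}\, c_1\, c_\star,\quad \lambda c_3 = \tfrac{2}{3}\, c_4\, c_\star,\quad \lambda c_4 = \tfrac{2}{3}\, c_3\, c_\star,\quad \lambda c_\star = \tfrac{2}{3}\left(c_1 c_2 + c_3 c_4\right),
\end{equation*}
whose positive unit solutions form a one-parameter family: $c_1=c_2=a$, $c_3=c_4=b$ for \emph{any} $a,b>0$ with $a^2+b^2=\tfrac{1}{3}$, together with $c_\star = 1/\sqrt{3}$ and $\lambda = 2/(3\sqrt{3})$. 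So the corollary as literally stated fails, and your proposed repair --- reading the hypothesis ``uplift from a pairwise graph $G$'' as requiring $G$ connected, equivalently $A$ irreducible --- is exactly what the paper's own proof of the preceding corollary tacitly assumes when it invokes Perron--Frobenius on $G$ (if the components of $G$ have distinct spectral radii, even \emph{existence} of a positive $\cZ$-eigenvector fails). With that hypothesis made explicit, your argument is complete.
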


It is important to remark that the computation of $\cZ$-eigenvalues is particularly complicated (See e.g. \cite{kolda2014adaptive,benson2019three,benson2019computing}), however our work clears the path for the simple computation of a whole class of hypergraphs.



\newpage

\section{Conclusions}\label{sec:conclusions}
In this study, we introduced a novel approach to analyze non-uniform hypergraphs by transforming them into an uniform hypergraph with the addition of an auxiliary node and suitably adjusting the weights of the transformed hyperedges, in an operation which we refer to as the ``uplift''. This transformation enabled us to apply well-defined centrality measures based on the eigenvectors of the resulting adjacency tensor. Through extensive comparisons with existing centrality measures in the literature, we have demonstrated the efficacy and relevance of our approach.

The key contribution of this work lies in the ability to bridge the gap between non-uniform hypergraphs and well-established centrality metrics. By introducing the auxiliary node, we effectively translated complex, multifaceted relationships into a format that aligns with already established hypergraph analysis techniques based on the $\cH$-eigenvector centrality, in a manner that is consistent due to the weighting choice. This, when supplemented with a projection operation, furnishes a sensible, well-defined novel centrality measure which, while at the same time retaining some degree of granularity (in the order which we can put the focus on), yields similar results, hence agreeing on the most important nodes of a hypergraph.

Our results showcased the advantages of our approach over existing methods: on the one hand the uniformization allows us to incorporate more information to the centrality when compared to uniform methods, on the other hand computing the adjacency tensor has a much lower computational complexity than the single other method available in the literature. Moreover, from an algebraic point of view we see that a generalization of the uplift to different nodes sheds light on the characterization of $\cZ$-eigenvectors of tensors, in particular it provides a simple route to their computation for a particular class of hypergraphs.

In summary, our study has presented a promising framework for the analysis of non-uniform hypergraphs, making them amenable to well-defined centrality measures based on tensor eigenvectors. This advancement holds great potential for applications across various domains, including social networks, biological systems, transportation networks, and beyond. By providing a bridge between complex, non-uniform relationships and established network analysis techniques, our approach contributes to a deeper understanding of the underlying structures and the identification of critical nodes within these intricate systems.

\subsection{Code and data availability}

The datasets used in the numerical simulations throughout this article, as well as the code used to analyze them, can be found in the repository \par 
\url{https://github.com/LaComarca-Lab/non-uniform-hypergraphs}.

\subsection{Acknowledgements}
This work has been partially supported by INCIBE/URJC Agreement M3386/2024/0031/001 within the framework of the Recovery, Transformation and Resilience Plan funds of the European Union (Next Generation EU) and by projects M2978 and M3033 (URJC Grants). G. C-A acknowledges funding from the URJC fellowship PREDOC-21-026-2164.

\printbibliography

\end{document}